\numberwithin{equation}{section}
\newcommand{\CC}{\mathbb{C}}
\newcommand{\RR}{\mathbb{R}}
\newcommand{\QQ}{\mathbb{Q}}
\newcommand{\ZZ}{\mathbb{Z}}
\newcommand{\D}{\mathcal{D}}
\renewcommand{\d}{{\rm dim}}
\newcommand{\e}{\varepsilon}
\newcommand{\supp}{{\rm supp}}
\newcommand{\simto}{\overset{\sim}{\longrightarrow}}
\newtheorem{definition}{Definition}[section]
\newtheorem{theorem}[definition]{Theorem}
\newtheorem{proposition}[definition]{Proposition}
\newtheorem{lemma}[definition]{Lemma}
\newtheorem{corollary}[definition]{Corollary}
\title{Coefficients of the poles of local zeta 
functions and their applications to oscillating integrals 
\footnote{{\bf 2000 Mathematics Subject 
Classification: } 14B05, 14M25, 14N99, 52B20}}
\author{Toshihisa \textsc{Okada}\footnote{
 Ibaraki, Japan.} \and 
Kiyoshi \textsc{Takeuchi}\footnote{Institute of Mathematics, 
University  of 
Tsukuba, 1-1-1, Tennodai, Tsukuba, 
Ibaraki, 305-8571, Japan. 
e-mail: takemicro@nifty.com}}
\date{}
\begin{document}

\maketitle

\begin{abstract}
We introduce a new method which enables us to 
calculate the coefficients of the poles of local 
zeta functions very precisely and prove some 
explicit formulas. Some vanishing theorems 
for the candidate poles of local zeta 
functions will be also given. 
Moreover we apply our method 
to oscillating integrals and obtain an explicit 
formula for the coefficients of their asymptotic 
expansions. 
\end{abstract}

\section{Introduction}\label{sec:1}

The theory of local zeta functions is an important 
subject in many fields of mathematics, such as 
 generalized functions, number theory and prehomogeneous 
vector spaces. By the fundamental work \cite{V} 
due to Varchenko, we can find a subset 
$P$ of $\QQ_{<0}$ in which the poles of a local 
zeta function are contained (see Section \ref{sec:3} 
and \cite{A-G-Z-V} etc. for the detail). 
 After this pioneering paper \cite{V} 
many authors studied the poles of local zeta 
functions. However, to the best of our knowledge,  
there is almost no paper which calculated 
the coefficients of these poles explicitly 
in a general setting. Note that 
in a remarkable work \cite{I} Igusa could 
calculate these coefficients when the local 
zeta functions are associated to the relative 
invariants of some prehomogeneous vector 
spaces. He calculated the coefficients by 
using some group actions. 

 In this paper, we propose a 
new method which enables us to 
calculate the coefficients of the poles of local 
zeta functions as precisely as we want. 
The key idea is the use of 
 a meromorphic continuation 
of the distribution 
\begin{equation}
x_{1+}^{l_1 \lambda +m_1} x_{2+}^{l_2 \lambda +m_2} 
\cdots x_{n+}^{l_n \lambda +m_n} 
\in \D^{\prime}(\RR^n) 
\end{equation}
($l_1,l_2, \ldots , l_n \in \RR_{>0}$ and 
$m_1, m_2, \ldots , m_n \in \RR_+
=\RR_{\geq 0}$) with 
respect to the complex parameter $\lambda$, 
which is different from the usual one used in 
the study of local zeta functions (see 
\cite{A-G-Z-V} etc.). See Section \ref{sec:2} 
for the detail. This meromorphic 
continuation is useful and enables us to 
get very precise information on the poles 
of local zeta functions. In order to state 
our results, now let us recall the definition 
of local zeta functions. 
Let $f$ be a real-valued real analytic function 
defined on an open neighborhood $U$ 
of $0 \in \RR^n$ and $\varphi \in C_0^{\infty}(U)$ 
a real-valued test function 
$\varphi \in C_0^{\infty}(U)$ on $U$. Then 
the integral 
\begin{equation}
Z_f(\varphi)(\lambda) =\int_{\RR^n} 
 |f(x)|^{\lambda} \varphi (x)dx 
\end{equation}
converges locally uniformly 
on $\{ \lambda \in \CC \ | \ 
\Re \lambda >0\}$ and defines a holomorphic 
function there. Moreover 
 $Z_f(\varphi)(\lambda)$ can be extended 
to a meromorphic function on the whole 
complex plane $\CC$ (see 
\cite{A-G-Z-V} and \cite{V} etc.). 
This meromorphic function $Z_f(\varphi)$ 
of $\lambda$ is called the local zeta 
function associated to 
$f$ and $\varphi$. As is clear from the 
proofs of our results in this paper, 
by our method we can calculate 
the coefficients of the poles 
of $Z_f(\varphi)$ very precisely 
once a good resolution of 
the singularities of the hypersurface 
$\{ x \in U  \ | \  f(x)=0 \}$ is 
obtained. It would be possible 
to calculate them as precisely 
as we want by numerical 
computations. However, since the 
general formula (which is evident 
from our proofs) is involved, we 
restrict ourselves here to a generic 
case where the formula can be 
stated neatly. From now on, 
 assume that the hypersurface 
$\{ x \in U   \ | \ f(x)=0 \}$ has an isolated 
singular point at $0 \in U \subset \RR^n$ 
and $f$ is convenient and 
non-degenerate (in the sense 
of Definition \ref{ND}). 
Let $\Sigma_0$ be the 
dual fan of the Newton polygon 
$\Gamma_+(f)$ of $f$ and $\Sigma$ 
a smooth subdivision of $\Sigma_0$. Recall that 
 $\Sigma =\{ \sigma \}$ 
is a family of rational polyhedral convex 
cones in $\RR_+^n=\RR^n_{\geq 0}$ such that $\RR_+^n= 
\bigcup_{\sigma \in \Sigma}\sigma$. 
For a cone $\sigma$ in $\Sigma$ let 
 $\{  a^1(\sigma), a^2(\sigma), 
\ldots , a^{\d \sigma}(\sigma)  \} 
\subset 
\partial \sigma \cap (\ZZ^n 
\setminus \{ 0\})$ 
be the $1$-skelton of 
$\sigma$. For each 
$1 \leq i \leq \d \sigma$ set 
\begin{equation}
l(a^i(\sigma))=\min_{\alpha \in \Gamma_+(f)} 
\langle a^i(\sigma), \alpha \rangle \in \ZZ_+ 
\end{equation}
and 
\begin{equation}
 |a^i(\sigma)| =\sum_{j=1}^n a^i(\sigma)_j 
\in \ZZ_{>0}. 
\end{equation}
For $0 \leq k \leq n$ let $\Sigma^{(k)} 
\subset \Sigma$ be the subset of 
$\Sigma$ consisting of $k$-dimensional 
cones. 
\begin{definition}
(see \cite{A-G-Z-V} and \cite{V} etc.) 
Let $P \subset \QQ_{<0}$ be the union of 
the following subsets of $\QQ_{<0}$: 
\begin{equation}
\{ -1,-2, -3, \cdots \cdots \}, 
\end{equation}
\begin{equation}
\left\{ -\frac{|a^1(\sigma)|}{l(a^1(\sigma))}, 
-\frac{|a^1(\sigma)|+1}{l(a^1(\sigma))}, 
\cdots \cdots \right\} 
\qquad 
(\sigma \in \Sigma^{(1)} 
 \ \text{such that} \ l(a^1(\sigma))>0). 
\end{equation}
\end{definition} 
By the fundamental 
results of \cite{V}, the poles of 
 $Z_f(\varphi)$ are contained 
in $P$. We call an element of $P$ a 
candidate pole of $Z_f(\varphi)$. Let us order 
the candidate poles of $Z_f(\varphi)$ as 
\begin{equation}
P= 
\{ -\lambda_1 > -\lambda_2 > -\lambda_3 > 
 \cdots \cdots \} \qquad (\lambda_j \in \QQ_{>0}). 
\end{equation}

\begin{definition}
\par \noindent (i) 
Let $\sigma \in \Sigma$. For $1 \leq i \leq \d \sigma$ 
such that $l(a^i(\sigma))>0$ ($\Longleftrightarrow 
a^i(\sigma) \in \RR_{>0}^n$) we set 
\begin{equation}
K^i(\sigma)=\left\{ 
\frac{|a^i(\sigma)|}{l(a^i(\sigma))}, 
\frac{|a^i(\sigma)|+1}{l(a^i(\sigma))}, 
\cdots \cdots \right\} \subset \QQ_{>0}. 
\end{equation}
\par \noindent (ii) 
For a candidate pole $-\lambda_j \in P$ 
of $Z_f(\varphi)$ and $0 \leq k \leq n$ we 
define a subset $\Sigma^{(k)}_j$ of $\Sigma^{(k)}$ 
by 
\begin{equation}
\Sigma^{(k)}_j=\{ \sigma \in \Sigma^{(k)} \ | \ 
l(a^i(\sigma))>0 \quad \text{and} \quad 
\lambda_j \in K^i(\sigma) 
\quad \text{for} \quad 1 \leq i \leq k \}. 
\end{equation}
\par \noindent (iii) 
For $\sigma \in \Sigma^{(k)}_j$ and $1 \leq i \leq k$ 
we define a non-negative integer $\nu(\sigma)_i 
\in \ZZ_+$ by 
\begin{equation}
\lambda_j=\frac{ |a^i(\sigma)| + \nu(\sigma)_i}{ 
l(a^i(\sigma))}. 
\end{equation}
\end{definition}

For the sake of simplicity, 
in this introduction we 
assume that $-\lambda_j \in P$ is not 
an integer. Then 
it is well-known that 
the order of the pole of $Z(\varphi)$ at 
$\lambda =-\lambda_j$ is less than or equal to 
\begin{equation}
k_j:= \max \{ 0 \leq k \leq n \ | \ 
\Sigma_j^{(k)} \not= \emptyset \} \in \ZZ_+. 
\end{equation}
Let 
\begin{equation}
\frac{a_{j,k_j}(\varphi) }{(\lambda +\lambda_j)^{k_j}}+ 
\cdots \cdots + 
\frac{a_{j,2}(\varphi)}{(\lambda +\lambda_j)^{2}}+ 
\frac{a_{j,1}(\varphi)}{(\lambda +\lambda_j)}+ 
\cdots \cdots  \qquad (a_{j,k}(\varphi) \in \RR) 
\end{equation}
be the Laurent expansion of $Z_f(\varphi)$ at 
$\lambda =-\lambda_j$. Then we obtain the 
following vanishing theorem 
which generalizes that of 
Jacobs in \cite{J}.

\begin{theorem}
Let $1 \leq k \leq k_j$. Assume that 
for any $\sigma \in \Sigma^{(k)}_j$ there exists 
$1 \leq i \leq k$ such that $\nu(\sigma)_i$ is 
odd. Then we have $a_{j,k}(\varphi)= \cdots = 
a_{j,k_j}(\varphi)=0$. 
\end{theorem}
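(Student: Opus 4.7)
The plan is to attack $Z_f(\varphi)$ via the toric birational morphism
$\pi \colon X_\Sigma \to \RR^n$ attached to the smooth fan $\Sigma$. Over the
affine chart corresponding to a cone $\sigma \in \Sigma^{(n)}$ with
$1$-skeleton $\{a^1(\sigma),\dots,a^n(\sigma)\}$, the convenience and
non-degeneracy of $f$ produce a monomialization
$f \circ \pi = y_1^{l(a^1(\sigma))} \cdots y_n^{l(a^n(\sigma))}\, u_\sigma(y)$
with $u_\sigma$ a nowhere vanishing real analytic function, and the Jacobian
of $\pi$ equals $y_1^{|a^1(\sigma)|-1} \cdots y_n^{|a^n(\sigma)|-1}$. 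After
partitioning unity and breaking $\RR^n$ near the origin into its $2^n$ sign
sectors $\varepsilon = (\varepsilon_i) \in \{+,-\}^n$, I would rewrite
$Z_f(\varphi)(\lambda)$ as a finite sum of model integrals of the form
\begin{equation*}
\int_{\RR^n} \prod_{i=1}^n (\varepsilon_i y_i)_+^{l(a^i(\sigma))\lambda + |a^i(\sigma)| - 1}\,
\psi_{\sigma,\varepsilon}(y)\, dy,
\end{equation*}
with $\psi_{\sigma,\varepsilon} \in C_0^\infty(\RR^n)$ a test function built from
$\varphi \circ \pi$, $|u_\sigma|^\lambda$, and the partition of unity. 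The
meromorphic continuation from Section \ref{sec:2} then applies factorwise in
the complex parameter $\lambda$.

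Next I would read off the pole structure at $\lambda = -\lambda_j$. A pole of
order $k$ coming from the cone $\sigma$ arises exactly when $k$ of the
factors $(\varepsilon_i y_i)_+^{l(a^i(\sigma))\lambda + |a^i(\sigma)|-1}$ are
simultaneously singular at $\lambda = -\lambda_j$; the corresponding indices
form a $k$-subset $I \subset \{1,\dots,n\}$ and the face
$\tau = \mathrm{cone}(a^i(\sigma) \colon i \in I)$ lies in $\Sigma^{(k)}_j$ by
definition. The residue of the one-dimensional distribution $y_+^{s}$ at
$s = -\nu-1$ is $\frac{(-1)^\nu}{\nu !}\delta^{(\nu)}$, while that of
$y_-^s := (-y)_+^s$ is $\frac{1}{\nu !}\delta^{(\nu)}$. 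Summing the two signs
$\varepsilon_i = \pm 1$ for each $i \in I$ therefore pulls out the combinatorial
factor
\begin{equation*}
\prod_{i \in I} \frac{1 + (-1)^{\nu(\sigma)_i}}{\nu(\sigma)_i !}
\end{equation*}
in front of the local contribution of $\tau$ to $a_{j,k}(\varphi)$. The hypothesis
says that for every $\tau \in \Sigma^{(k)}_j$ at least one $\nu(\tau)_i$ is odd,
so this factor vanishes for every cone, and hence $a_{j,k}(\varphi) = 0$.

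Finally, to upgrade the vanishing to $a_{j,k'}(\varphi)$ for $k \leq k' \leq k_j$, I
would use that any $\sigma \in \Sigma^{(k')}_j$ has a $k$-dimensional face
$\tau \in \Sigma^{(k)}_j$ obtained by selecting any $k$ of its $k'$ relevant
rays. Since $\nu(\sigma)_i$ depends only on the ray $a^i(\sigma)$, an odd
$\nu(\tau)_i$ remains odd when viewed as an index of $\sigma$, so the same
product formula kills the contribution of $\sigma$ to $a_{j,k'}(\varphi)$. The
main obstacle I anticipate is the bookkeeping of the iterated Laurent
expansion in $\lambda$: one must verify that the coefficient of
$(\lambda+\lambda_j)^{-k'}$ in the $n$-fold product really comes only from
the $k'$ singular factors indexed by the chosen $I$, with the remaining
factors holomorphic at $\lambda = -\lambda_j$ and contributing through their
values there, so that the combinatorial factor above is indeed the right
prefactor and the sign cancellation closes the argument.
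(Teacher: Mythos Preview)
Your argument is correct and is essentially the paper's own proof: pull back through the toric resolution, use a partition of unity on $X_\Sigma$, and extract the coefficient of $(\lambda+\lambda_j)^{-k}$ from the tensor-product expansion of Section~\ref{sec:2}, where summing the $\pm$ sectors produces the factor $\prod_{i}(1+(-1)^{\nu(\sigma)_i})$ that vanishes under the hypothesis. The only point to tighten is that $u_\sigma=f_\sigma$ is not globally nonvanishing on the chart (non-degeneracy only gives $f_\sigma(0)\neq 0$ and transversality of $\{f_\sigma=0\}$ to the coordinate strata), so pieces of the partition of unity meeting the strict transform require one extra normal-crossing coordinate change, as in the paper's case~(b); but the same parity prefactor sits in front of that contribution as well, so the vanishing goes through unchanged.
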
 
We can state also another vanishing theorem. 
In order to state it, let 
\begin{equation}
\varphi (x)=\sum_{\alpha \in \ZZ_+^n} 
c_{\alpha} x^{\alpha} \qquad 
(c_{\alpha} \in \RR) 
\end{equation}
be the Taylor expansion of the test 
function $\varphi$ at 
$0 \in U \subset \RR^n$. 
\begin{theorem}
For $1 \leq k \leq k_j$ 
assume that 
$\{ \alpha \in \ZZ_+^n \ | \ 
c_{\alpha} \not= 0\} \cap 
 \Delta_{j,k} = \emptyset$, 
where $\Delta_{j,k}$ is a certain compact 
subset of $\RR^n_+$ (see 
Definition \ref{DD}). 
Then we have 
$a_{j,k}(\varphi)= \cdots = 
a_{j,k_j}(\varphi)=0$. 
\end{theorem}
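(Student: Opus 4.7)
The plan is to use the toric resolution $\pi:\tilde U\to U$ of the singularity of $\{f=0\}$ associated to the smooth subdivision $\Sigma$, combined with the meromorphic continuation of distributions
$x_{1+}^{l_1\lambda+m_1}\cdots x_{n+}^{l_n\lambda+m_n}$
developed in Section \ref{sec:2}. Fix a top-dimensional cone $\sigma\in\Sigma^{(n)}$ with $1$-skeleton $a^1(\sigma),\ldots,a^n(\sigma)$. On the associated affine toric chart with coordinates $y$, one has $(\pi(y))^\alpha=\prod_i y_i^{\langle a^i(\sigma),\alpha\rangle}$, and the convenience and non-degeneracy of $f$ force
\[
|f\circ\pi(y)|\,=\,u_\sigma(y)\prod_{i=1}^n y_i^{l(a^i(\sigma))}
\]
with $u_\sigma$ a non-vanishing real analytic function, while the Jacobian equals $\prod_i y_i^{|a^i(\sigma)|-1}$. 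Expanding $\varphi$ into a Taylor polynomial at the origin plus a remainder vanishing to sufficiently high order (harmless for poles of order $\geq k$ at $\lambda=-\lambda_j$ once the truncation degree is large enough), the contribution to $Z_f(\varphi)$ from this chart and from the monomial $c_\alpha x^\alpha$ becomes, up to the entire factor $u_\sigma(y)^\lambda$, a model distribution of Section \ref{sec:2} with exponents $l_i=l(a^i(\sigma))$ and $m_i=\langle a^i(\sigma),\alpha\rangle+|a^i(\sigma)|-1$.

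The continuation formulas of Section \ref{sec:2} then pinpoint a simple pole coming from the $i$-th variable at $\lambda=-\lambda_j$ precisely when $\langle a^i(\sigma),\alpha\rangle\leq\nu(\sigma)_i$ (which in particular requires $\lambda_j\in K^i(\sigma)$, hence $l(a^i(\sigma))>0$), and the order of the pole of the whole chart contribution equals the number of indices $i$ for which this inequality holds. Consequently, a non-trivial contribution to any of $a_{j,k}(\varphi),\ldots,a_{j,k_j}(\varphi)$ requires at least $k$ such indices $i\in I\subset\{1,\ldots,n\}$; the face of $\sigma$ spanned by $\{a^i(\sigma)\}_{i\in I}$ is then automatically an element of $\Sigma^{(k)}_j$. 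The set $\Delta_{j,k}$ of Definition \ref{DD} is, by construction, exactly the union of all multi-indices $\alpha\in\ZZ_+^n$ arising in this way; its compactness is built into that definition through the conditions $l(a^i(\sigma))>0$ on the relevant rays (which force $a^i(\sigma)\in\RR_{>0}^n$).

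With this combinatorial identification in hand, the theorem is immediate: only Taylor coefficients $c_\alpha$ with $\alpha\in\Delta_{j,k}$ can contribute to any of $a_{j,k}(\varphi),\ldots,a_{j,k_j}(\varphi)$, so the hypothesis $c_\alpha=0$ on $\Delta_{j,k}$ forces all of them to vanish. The main obstacle is the careful bookkeeping in the second step: one must sum the contributions of all pairs $(\sigma,I)$ with $\sigma\in\Sigma^{(n)}$ and $I\subset\{1,\ldots,n\}$ indexing a face in $\Sigma^{(k)}_j$, apply the explicit residue expressions produced by the Section \ref{sec:2} continuation, and verify that the resulting set of contributing $\alpha$ matches Definition \ref{DD} with no spurious cancellations. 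The fact that the new meromorphic continuation treats each of the $n$ variables independently is exactly what makes this accounting tractable.
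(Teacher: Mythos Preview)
Your approach is essentially the paper's: pull back through the toric resolution, reduce to the model distributions of Section~\ref{sec:2}, and read off that only monomials $x^\alpha$ with $\alpha\in\Delta_{j,k}$ can contribute to $a_{j,k}(\varphi),\ldots,a_{j,k_j}(\varphi)$. One technical point you gloss over: $u_\sigma$ (the paper's $f_\sigma$) is \emph{not} globally non-vanishing on the chart---non-degeneracy only gives $f_\sigma(0)\neq 0$ and transversality to the coordinate subspaces---so the paper uses a partition of unity $\{\varphi_q\}$ on $X_\Sigma$ and treats separately the pieces whose support meets $\{f_{\sigma}=0\}$ (case~(b) in the proof of Theorem~\ref{MT}); in the non-integer case assumed here this locus contributes no pole at $-\lambda_j$, so your shortcut is ultimately harmless, but it should be stated correctly.
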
 
Moreover, for the coefficients $a_{j,n}(\varphi)$ 
of the deepest poles $\lambda =-\lambda_j \in P$ 
we can prove the following explicit formula. 
For $\sigma \in \Sigma^{(n)}_j$ and $\alpha 
\in \ZZ_+^n$ we define an 
integer $\mu(\sigma, \alpha)_i$ by 
\begin{equation}
\mu(\sigma, \alpha)_i=\nu(\sigma)_i 
- \langle a^i(\sigma), \alpha \rangle 
\in \ZZ. 
\end{equation}

\begin{theorem}\label{TH3} 
Assume that $k_j=n$. 
Then $a_{j,n}(\varphi)$ is 
given by 
\begin{eqnarray}
 & a_{j,n}(\varphi) & 
\nonumber\\ 
 & = & \sum_{\alpha \in \Delta_{j,n}} 
  \left\{ 
\sum_{\sigma \in \Sigma^{(n)}_j} 
\left( \prod_{i=1}^n 
\frac{1+(-1)^{\nu(\sigma)_i}}{ 
l(a^i(\sigma)) \times \mu(\sigma , \alpha)_i!}
\right) 
\frac{\partial^{\mu(\sigma , \alpha)_1 + 
\cdots + \mu(\sigma , \alpha)_n} }{ 
\partial y_1^{\mu(\sigma , \alpha)_1} 
\cdots \partial y_n^{\mu(\sigma , \alpha)_n} }
 |f_{\sigma}|^{-\lambda_j} (0) \right\} 
\nonumber\\ 
 & \times & 
\frac{\partial_x^{\alpha} \varphi (0)}{\alpha !}, 
\end{eqnarray}
where $f_{\sigma}$ is a function defined in 
a neighborhood of $0 \in \RR^n_y$ such that 
$f_{\sigma}(0) \not= 0$ (see Section \ref{sec:3} 
for the definition) and 
for $\mu <0$ we set $\frac{\partial^{\mu}
}{\partial y_i^{\mu} } 
( \cdot )=0$. 
\end{theorem}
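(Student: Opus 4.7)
My strategy is to pull $Z_f(\varphi)(\lambda)$ back along a toric modification built from $\Sigma$, reduce the integral to elementary monomial distributions of the form studied in Section~\ref{sec:2}, and then read off the order-$n$ Laurent coefficient at $\lambda=-\lambda_j$ by Leibniz.

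First I construct the toric modification $\pi:X_{\Sigma}\to\RR^n$ by gluing $\Sigma$ over each of the $2^n$ orthants. For $\sigma\in\Sigma^{(n)}$ the associated chart is $U_{\sigma}\simeq\RR^n_y$, on which $\pi$ is monomial with exponent matrix $A(\sigma)=(a^1(\sigma),\ldots,a^n(\sigma))$, and convenience plus non-degeneracy of $f$ give
\begin{equation}
f\circ\pi(y)=\Bigl(\prod_{i=1}^n y_i^{l(a^i(\sigma))}\Bigr)f_{\sigma}(y),\quad f_{\sigma}(0)\ne 0,\quad J\pi(y)=\pm\prod_{i=1}^n y_i^{|a^i(\sigma)|-1}.
\end{equation}
Splitting $Z_f(\varphi)$ into the $2^n$ orthant integrals, pulling back on each, and summing, the chart $U_{\sigma}$ contributes an integral over $\RR^n_y$ of the product distribution $\prod_i|y_i|^{l(a^i(\sigma))\lambda+|a^i(\sigma)|-1}\,|f_{\sigma}|^{\lambda}$ against the symmetrized pullback of $\varphi$; contributions of lower-dimensional cones $\sigma\in\Sigma^{(k)}$ with $k<n$ are supported on $\{y_i=0\}$ for some $i$ and have zero measure.

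Next I invoke the meromorphic continuation from Section~\ref{sec:2}: the product distribution $\prod_i|y_i|^{l_i\lambda+m_i}$ tested against a smooth compactly supported function $\psi(y,\lambda)$ has a pole of order exactly $n$ at $\lambda=-\lambda_j$ if and only if for every $i$ one can write $-l_i\lambda_j+m_i=-1-k_i$ with $k_i\in\ZZ_+$, and then its leading Laurent coefficient is
\begin{equation}
\prod_{i=1}^n\frac{1+(-1)^{k_i}}{l_i\cdot k_i!}\cdot\frac{\partial^{k_1+\cdots+k_n}\psi}{\partial y_1^{k_1}\cdots\partial y_n^{k_n}}(0,-\lambda_j).
\end{equation}
With $m_i=|a^i(\sigma)|-1$ and $\psi(y,\lambda)=|f_{\sigma}(y)|^{\lambda}\varphi(\pi(y))$, this forces $k_i=\nu(\sigma)_i$ and hence $\sigma\in\Sigma_j^{(n)}$.

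Finally, applying the Leibniz rule to the $\nu(\sigma)$-fold $y$-derivative of $|f_{\sigma}|^{-\lambda_j}\cdot\varphi(\pi(y))$ and using the pullback expansion $\varphi(\pi(y))=\sum_{\alpha}\tfrac{\partial_x^{\alpha}\varphi(0)}{\alpha!}\prod_i y_i^{\langle a^i(\sigma),\alpha\rangle}$, only the multi-index $\rho=\nu(\sigma)-\mu$ with $\rho_i=\langle a^i(\sigma),\alpha\rangle$ contributes at $y=0$; this identifies $\mu=\mu(\sigma,\alpha)$ and restricts $\alpha$ to $\Delta_{j,n}$. After combining binomial and factorial factors via $\nu(\sigma)!/\prod_i\nu(\sigma)_i!=1$, the stated formula emerges. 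The main obstacle I anticipate is the orthant sign bookkeeping: one must absorb the $\varepsilon$-dependence of the local pullback $f_{\sigma,\varepsilon}(y)$ into a single $|f_{\sigma}|^{-\lambda_j}$ and show that the collective phases $\varepsilon^{\alpha}$ sum cleanly to $\prod_i(1+(-1)^{\nu(\sigma)_i})$; as a subordinate verification, one must rule out order-$n$ contributions from $\sigma\in\Sigma^{(k)}$ with $k<n$ and from $\alpha\notin\Delta_{j,n}$, both of which keep at least one factor $|y_i|^{\cdots}$ holomorphic at $\lambda=-\lambda_j$.
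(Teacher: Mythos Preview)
Your proposal is correct and follows essentially the same route as the paper's own proof (Theorem~\ref{THA}): pull back along the toric resolution, apply the Section~\ref{sec:2} formula for the order-$n$ Laurent coefficient of $\int\prod_i|y_i|^{l_i\lambda+m_i}\psi\,dy$ (Proposition~2.6), then Taylor expand $\varphi$ and use Leibniz to convert the $\nu(\sigma)$-derivative of $|f_\sigma|^{-\lambda_j}(\varphi\circ\pi(\sigma))$ at $y=0$ into the stated sum over $\alpha\in\Delta_{j,n}$. The only cosmetic difference is that the paper localizes via a partition of unity on $X_{\Sigma}$ (inherited from the proof of Theorem~\ref{MT}) rather than your explicit $2^n$-orthant splitting, which sidesteps the sign bookkeeping you flag as an obstacle; the actual computation at the origin of each chart is identical.
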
 

Our results on the poles of local zeta function 
$Z_f(\varphi)$ also 
have some applications to 
oscillating 
integral $I_f(\varphi)(t)$ ($t \in \RR$) defined by 
\begin{equation}
I_f(\varphi)(t) =\int_{\RR^n} 
e^{it f(x)} \varphi (x)dx. 
\end{equation}
By the fundamental results of Varchenko \cite{V} 
(see also \cite{A-G-Z-V} for the detail), 
as $t \longrightarrow + \infty$ the oscillating 
integral $I_f(\varphi)(t)$ has an 
asymptotic expansion of the form 
\begin{equation} 
I_f(\varphi)(t) \sim \sum_{j=1}^{\infty} 
\sum_{k=1}^{k_j} c_{j,k}(\varphi) 
t^{-\lambda_j}(\log t)^{k-1}, 
\end{equation}
where $c_{j,k}(\varphi)$ are some complex 
numbers. Despite the important contributions 
by many mathematicians (see for example 
\cite{A-G-Z-V}, 
\cite{D-N-S}, \cite{G-S} and \cite{S} 
etc.), little 
is known about the coefficients $c_{j,k}(\varphi)$ 
of this asymptotic expansion. 
Here we can prove the following 
vanishing theorem. 

\begin{theorem}
\par \noindent (i) 
Let $1 \leq k \leq k_j$. 
Assume that $\lambda_j$ is not an integer and 
$\{ \alpha \in \ZZ_+^n \ | \ 
c_{\alpha} \not= 0\} \cap 
 \Delta_{j,k} = \emptyset$. 
Then we have $c_{j,k}(\varphi)= \cdots = 
c_{j,k_j}(\varphi)=0$. 
\par \noindent (ii) 
Let $2 \leq k \leq k_j$. 
Assume that $\lambda_j$ is an integer and 
$\{ \alpha \in \ZZ_+^n \ | \ 
c_{\alpha} \not= 0\} \cap 
 \Delta_{j,k-1} = \emptyset$. 
Then we have $c_{j,k}(\varphi)= \cdots = 
c_{j,k_j}(\varphi)=0$. 
\end{theorem} 
Moreover by Theorem \ref{TH3} we will 
prove also an explicit formula for 
the coefficients $c_{j,n}(\varphi)$ 
of $t^{-\lambda_j} (\log t)^{n-1}$ 
in the asymptotic expansion of 
$I_f(\varphi)$. See Section \ref{sec:5} 
for the detail. Finally let us mention 
that the method introduced in this 
paper would have some applications 
also in the study of 
$p$-adic local zeta functions 
(see \cite{I} etc.). It would 
be a very interesting subject to 
study the twisted monodromy 
conjecture (see \cite{N} for a 
review on this conjecture) 
by this method.

\section{Meromorphic continuations 
of distributions}\label{sec:2}

In this section, we prepare some basic results on 
the meromorphic continuations of the distributions 
$x_{1+}^{l_1\lambda +m_1} x_{2+}^{l_2\lambda +m_2} 
\cdots x_{n+}^{l_n\lambda +m_n} $ ($l_i \in \RR_{>0}$, 
$m_i \in \RR_{+}=\RR_{\geq 0}$) with respect to 
the complex parameter $\lambda$. In Section \ref{sec:3} 
and \ref{sec:4} these results will be used 
effectively to study 
the poles of local zeta functions. First, let 
us recall the classical result in the $1$-dimensional 
case (see \cite{G} etc.). Let $l \in \RR_{>0}$ 
be a positive real number and $m \in \RR_+$. Then 
for $\varphi \in C_0^{\infty}(\RR)$ the integral 
\begin{equation}
F_+(\varphi)(\lambda)=\int_{-\infty}^{+\infty} 
x_+^{l \lambda +m} \varphi (x) dx =\int_0^{+ \infty} 
x^{l \lambda +m} \varphi (x)  dx 
\end{equation} 
converges locally uniformly on $\{ \lambda \in \CC 
 \ | \ \Re \lambda > - \frac{m+1}{l} \}$ and defines 
a holomorphic function there. In other words, 
if $ \Re \lambda > - \frac{m+1}{l}$ the map 
$\varphi \mapsto F_+(\varphi)(\lambda) \in \RR$ 
defines a distribution $x_+^{l \lambda +m} \in 
\D^{\prime}(\RR)$ on $\RR$. Let us fix a test 
function $\varphi \in C_0^{\infty}(\RR)$. Following 
the methods in Gelfand-Shilov 
\cite{G-S} we shall extend 
$F_+(\varphi)$ to a meromorphic function 
on the whole complex plane $\CC$ as follows. 
First take a sufficiently large integer 
$N >>0$. Then for any $\lambda \in \CC$ 
such that $ \Re \lambda > - \frac{m+1}{l}$ 
we have 
\begin{eqnarray}
& & F_+(\varphi)(\lambda) 
=\int_0^{+ \infty} 
x^{l \lambda +m}\varphi (x) dx 
\nonumber\\
&=& \int_0^1 x^{l \lambda +m}
\left[ \varphi (x) - \sum_{r=1}^N 
\varphi^{(r-1)}(0) 
\frac{x^{r-1}}{(r-1)!}\right] dx 
\nonumber \\
& + & \int_1^{+ \infty} x^{l \lambda +m}\varphi (x) dx 
+ \sum_{r=1}^N \frac{\varphi^{(r-1)}(0)}{ 
(r-1)! (l \lambda +m+r)}
\nonumber \\
&=&\int_0^1 x^{l \lambda +m} dx 
\int_0^x \frac{\varphi^{(N)}(t)}{(N-1)!} 
(x-t)^{N-1}dt +\int_1^{+ \infty}
 x^{l \lambda +m}\varphi (x) dx
\nonumber \\
& + & \sum_{r=1}^N \frac{\varphi^{(r-1)}(0)}{ 
(r-1)! \times l (\lambda + \frac{m+r}{l})}
\nonumber\\
&=& \int_0^1 g_N(\lambda , t) \varphi^{(N)} 
(t) dt + \int_1^{+ \infty}
 t^{l \lambda +m}\varphi (t) dt
\nonumber\\
& + & \sum_{r=1}^N \frac{1}{ 
(r-1)! \times l (\lambda + \frac{m+r}{l})}
\langle (-1)^{r-1} \delta^{(r-1)},  
\varphi \rangle , 
\end{eqnarray}
where $\delta \in \D^{\prime}(\RR)$ is 
Dirac's delta function and we set 
\begin{equation} 
g_N(\lambda, t)=\frac{1}{(N-1)!} \int_t^1 
x^{l \lambda +m}(x-t)^{N-1}dx
\end{equation} 
for $0< t \leq 1$. The function 
$g_N(\lambda, t)$ satisfies the following 
nice properties. 

\begin{lemma}
\par \noindent (i) 
For any $0< t \leq 1$, $g_N(\lambda, t)$ 
is an entire function of $\lambda$. 
\par \noindent (ii) 
If $\Re \lambda > -\frac{m+N+1}{l}$ then 
the function $g_N(\lambda, t)$ of $t$ 
is integrable on $(0,1]$. 
\end{lemma}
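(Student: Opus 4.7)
The plan is to prove each part by reducing the analysis of $g_N(\lambda,t)$ to a single elementary integral, then estimating it.

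For part (i), I would fix $0 < t \le 1$ and observe that the integrand $x^{l\lambda+m}(x-t)^{N-1}$ is, for each $x\in[t,1]$, an entire function of $\lambda$ (since $x>0$, the power $x^{l\lambda+m}=e^{(l\lambda+m)\log x}$ is well-defined without branch issues). On any compact set $K\subset\CC$, setting $A=\min_{\lambda\in K}\Re\lambda$ and $B=\max_{\lambda\in K}\Re\lambda$, we get the uniform bound
\begin{equation*}
|x^{l\lambda+m}(x-t)^{N-1}| \;\le\; \max(t^{lA+m},\,1^{lB+m})\cdot 1,
\end{equation*}
so dominated convergence (or Morera's theorem applied along closed contours in $K$, combined with Fubini) shows that $g_N(\lambda,t)$ is holomorphic on $K$, hence entire in $\lambda$.

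For part (ii), the main step is the change of variable $x=tr$ with $r\in[1,1/t]$, which yields
\begin{equation*}
g_N(\lambda,t) \;=\; \frac{t^{l\lambda+m+N}}{(N-1)!}\int_1^{1/t} r^{l\lambda+m}(r-1)^{N-1}\,dr.
\end{equation*}
Write $\alpha=l\Re\lambda+m$. Using $(r-1)^{N-1}\le r^{N-1}$, the inner integral is bounded by $\int_1^{1/t}r^{\alpha+N-1}\,dr$. I would split into three cases according to the sign of $\alpha+N$: if $\alpha+N>0$, the inner integral is $O(t^{-(\alpha+N)})$, so $|g_N(\lambda,t)|$ is bounded on $(0,1]$; if $\alpha+N=0$, it is $O(\log(1/t))$, still giving an integrable bound on $(0,1]$; if $\alpha+N<0$, the improper integral $\int_1^\infty r^\alpha(r-1)^{N-1}\,dr$ converges (since $\alpha+N-1<-1$), so $|g_N(\lambda,t)|\le C\,t^{\alpha+N}$, which is integrable on $(0,1]$ exactly when $\alpha+N+1>0$, i.e.\ $\Re\lambda>-\frac{m+N+1}{l}$.

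There is no real obstacle here: the only point requiring a bit of care is organizing the case analysis so as to identify the precise threshold $\Re\lambda > -\frac{m+N+1}{l}$ from the case $\alpha+N<0$, which is the genuinely binding one; the other two cases automatically give integrable majorants. Both parts then follow from these direct estimates.
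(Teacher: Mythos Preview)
Your argument is correct. The paper states this lemma without proof, so there is nothing to compare against; your substitution $x=tr$ together with the case split on the sign of $\alpha+N$ cleanly isolates the threshold $\Re\lambda>-\tfrac{m+N+1}{l}$, and part~(i) is the standard holomorphy-under-the-integral-sign argument.
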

By this lemma we see that the integral 
\begin{equation}
\int_0^1 g_N(\lambda , t) \varphi^{(N)} 
(t) dt 
\end{equation} 
converges locally uniformly on $\{ \lambda 
\in \CC \ | \ \Re \lambda > - \frac{m+N+1}{l} \}$ 
and defines a holomorphic function there. 
Since the integral $\int_1^{+ \infty}
 t^{l \lambda +m}\varphi (t) dt$ is 
an entire function of $\lambda$, 
the function $F_+(\varphi)$ is 
extended to a meromorphic function on 
$\{ \lambda 
\in \CC \ | \ \Re \lambda > 
- \frac{m+N+1}{l} \}$ with simple poles 
at $\lambda =- \frac{m+r}{l}$ ($r=1,2, 
\ldots , N$). Moreover the residue of 
$F_+(\varphi)$ at $\lambda =- \frac{m+r}{l}$ 
is given by 
\begin{equation}\label{R1} 
\text{Res}(F_+(\varphi) ; - \frac{m+r}{l})
=\frac{1}{(r-1)! \times l} 
\langle (-1)^{r-1} \delta^{(r-1)},  
\varphi \rangle . 
\end{equation} 
Similarly set 
\begin{equation}
F_-(\varphi)(\lambda)=\int_{-\infty}^{+\infty} 
x_{-}^{l \lambda +m} \varphi (x) dx 
=\int_{-\infty}^{0} 
 |x|^{l \lambda +m} \varphi (x)  dx.  
\end{equation} 
Then $F_-(\varphi)(\lambda)$ can be also 
extended to a meromorphic function on 
the complex plane $\CC$ with simple poles 
at $\lambda =- \frac{m+r}{l}$ ($r=1,2, 
\ldots , N$) and we have 
\begin{equation}\label{R2} 
\text{Res}(F_-(\varphi) ; 
- \frac{m+r}{l})
=\frac{1}{(r-1)! \times l} 
\langle \delta^{(r-1)},  
\varphi \rangle . 
\end{equation} 
This basic result in the $1$-dimensional case 
can be generalized to higher-dimensional cases 
as follows. Let $\varphi \in C_0^{\infty}(\RR^n)$ 
be a test function on $\RR^n$. 
For positive real numbers 
$l_1,l_2, \ldots , l_n \in \RR_{>0}$ and 
$m_1, m_2, \ldots , m_n \in \RR_+$ we set 
\begin{equation}
G(\varphi)(\lambda)=\int_{\RR^n} 
x_{1+}^{l_1 \lambda +m_1} x_{2+}^{l_2 \lambda +m_2} 
\cdots x_{n+}^{l_n \lambda +m_n} 
\varphi (x) dx
\end{equation}
and 
\begin{equation}
L=-\min \left\{ \frac{m_1+1}{l_1}, \frac{m_2+1}{l_2}, 
\cdots , \frac{m_n+1}{l_n} \right\} . 
\end{equation}
Then this integral 
converges locally uniformly 
on $\{ \lambda \in \CC \ | \ \Re \lambda >L \}$ 
and defines a holomorphic function there. 
By using the tensor product $\otimes$ of 
distributions we can rewrite $G(\varphi)(\lambda)$ as 
\begin{equation}\label{E1} 
G(\varphi)(\lambda)=
\langle  x_{1+}^{l_1 \lambda +m_1}
\otimes x_{2+}^{l_2 \lambda +m_2} \otimes 
\cdots \otimes x_{n+}^{l_n \lambda +m_n}, 
\varphi \rangle . 
\end{equation}
Let $N >>0$ be a sufficiently large integer. 
Then for $\Re \lambda >L$ we have the following 
equalities in the space $\D^{\prime}(\RR)$ of 
$1$-dimensional distributions. 
\begin{equation}
 x_{i+}^{l_i \lambda +m_i}
=G_{i,N}(\lambda)+\sum_{r=1}^N \frac{(-1)^{r-1}}{ 
(r-1)! l_i (\lambda +\frac{m_i+r}{l_i})} 
\delta^{(r-1)}(x_i) \qquad 
(i=1,2, \ldots , n), 
\end{equation}
where $G_{i,N}(\lambda) \in \D^{\prime}(\RR)$ 
is a $1$-dimensional distribution defined by 
\begin{equation}
 \langle G_{i,N}(\lambda), \phi \rangle 
=\int_0^1 g_{i,N}(\lambda , t) \phi^{(N)}(t) dt 
+\int_1^{+ \infty}t^{l_i\lambda +m_i} \phi(t)dt 
\qquad 
(\phi \in C_0^{\infty}(\RR)). 
\end{equation}
Here $g_{i,N}(\lambda , t)$ is an integrable function of 
$t \in (0, 1]$ for $\lambda \in \CC$ such that 
$\Re \lambda > \frac{m_i+N+1}{l_i}$. Putting 
this new expressions of the $1$-dimensional 
distributions $x_{i+}^{l_i \lambda +m_i}$ 
into \eqref{E1} we see that $G(\varphi)(\lambda)$ 
is extended to a meromorphic function on 
$\{ \lambda \in \CC \ | \ \Re \lambda > 
L_N\}$, where we set 
\begin{equation}
L_N=-\min \left\{ \frac{m_1+N+1}{l_1}, \frac{m_2+N+1}{l_2}, 
\cdots , \frac{m_n+N+1}{l_n} \right\} . 
\end{equation}
Since the integer $N>>0$ can be taken as 
large as possible, $G(\varphi)(\lambda)$ is 
meromorphically continued to the whole 
complex plane $\CC$. Moreover the poles of 
this meromorphic function $G(\varphi)(\lambda)$ 
are contained in a discrete set $P$ in $\CC$ 
defined by 
\begin{equation}
P=\bigcup_{1 \leq i \leq n}
\left\{ -\frac{m_i+1}{l_i}, -\frac{m_i+2}{l_i}, 
- \frac{m_i+3}{l_i}, \cdots \cdots \right\} 
\subset \RR_{<0} \subset \CC . 
\end{equation}
An element of $P$ is called a candidate pole of 
$G(\varphi)$. 
Let us rewrite this set $P$ as 
\begin{equation}
P=\left\{ -\lambda_1 > -\lambda_2 > -\lambda_3 > 
\cdots \cdots  \right\} \qquad 
(\lambda_j \in \RR_{>0}). 
\end{equation}
For each candidate pole $-\lambda_j \in P$ 
of $G(\varphi)$ we define a subset 
$S_j$ of $\{ 1,2, \ldots , n\}$ by 
\begin{equation}
S_j=\{ 1 \leq i \leq n \ | \ ^\exists r \in \ZZ_{>0} 
\ \text{such that} \ \frac{m_i+r}{l_i}=\lambda_j\} 
\end{equation}
and set $k_j =\sharp S_j$. Then we can easily see that 
the order of the pole of $G(\varphi)$ at 
$\lambda =-\lambda_j$ is less than or equal to $k_j$. 
For a candidate pole $-\lambda_j \in P$ 
of $G(\varphi)$ let 
\begin{equation}
\frac{a_{j,k_j}}{(\lambda +\lambda_j)^{k_j}}+ 
\cdots \cdots + 
\frac{a_{j,2}}{(\lambda +\lambda_j)^{2}}+ 
\frac{a_{j,1}}{(\lambda +\lambda_j)}+ 
\cdots \cdots  \qquad (a_{j,k} \in \RR) 
\end{equation}
be the Laurent expansion of $G(\varphi)$ at 
$\lambda =-\lambda_j$. For each $i \in S_j \subset 
\{ 1,2, \ldots , n\}$ we define a non-negative 
integer $\nu_i \in \ZZ_+$ by the formula 
\begin{equation}
\frac{m_i+1 + \nu_i}{l_i}=\lambda_j. 
\end{equation}
\begin{proposition}\label{KP} 
Let $1 \leq k \leq k_j$. Then  
the coefficient $a_{j,k}$ 
of $\frac{1}{(\lambda + \lambda_j)^k}$ in the 
Laurent expansion of $G(\varphi)(\lambda)$ at 
$\lambda =-\lambda_j$ is written as 
\begin{equation}
a_{j,k}=\sum_{S \subset S_j: \sharp S \geq k}
\frac{1}{(\sharp S -k)!} 
\left\{ 
\frac{\partial^{\sharp S-k}}{
 \partial \lambda^{\sharp S -k}} 
\rho_S(\lambda)\right\}_{\lambda =-\lambda_j}, 
\end{equation}
where for each subset $S \subset S_j$ of $S_j$ 
such that $\sharp S \geq k$ the function 
$\rho_S(\lambda)$ is holomorphic at 
$\lambda =-\lambda_j$ and written as follows: 
\par \noindent 
For the sake of simplicity, assume that 
$S=\{ 1,2, \ldots , l\}$ for some 
$l \geq k$. Then we have 
\begin{eqnarray}
\rho_S(\lambda) & = & 
\prod_{i=1}^l 
\frac{1}{l_i \nu_i !} \times 
\int_{ \{ x \in \RR^n | x_1= \cdots = x_l=0\} } 
\prod_{i=l+1}^n g_i(\lambda , x_i) 
\nonumber\\ 
 & & 
 \times 
\left\{ 
\frac{\partial^{\nu_1 + \cdots + 
 \nu_l}}{\partial x_1^{\nu_1} 
\cdots \partial x_l^{\nu_l}} 
\varphi (x) 
\right\}_{x_1= \cdots =x_l=0} 
  dx_{l+1} \cdots dx_n, 
\end{eqnarray}
where $g_i(\lambda , \cdot )$ 
($i=l+1, \ldots , n$) are $1$-dimensional 
integrable functions with holomorphic 
parameter $\lambda$ at 
$\lambda =-\lambda_j$. 
\end{proposition}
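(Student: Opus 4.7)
The plan is to propagate the $1$-dimensional meromorphic decomposition of each $x_{i+}^{l_i\lambda+m_i}$ into the tensor product and read off the Laurent coefficients at $\lambda = -\lambda_j$. Choose $N$ so large that $-\lambda_j$ lies in the common half-plane of holomorphy of every $G_{i,N}(\lambda)$, and split
\begin{equation}
x_{i+}^{l_i\lambda+m_i} = P_i(\lambda) + R_i(\lambda),
\end{equation}
where for $i \in S_j$ we set $P_i(\lambda) = \frac{(-1)^{\nu_i}}{\nu_i!\, l_i\,(\lambda+\lambda_j)}\delta^{(\nu_i)}(x_i)$ (the unique summand in the $1$-dimensional decomposition carrying a pole at $-\lambda_j$), and for $i \notin S_j$ we put $P_i \equiv 0$. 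In both cases $R_i(\lambda)$ is a $1$-dimensional distribution depending holomorphically on $\lambda$ in a neighborhood of $-\lambda_j$.

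Substituting into \eqref{E1} and expanding the tensor product distributively yields
\begin{equation}
G(\varphi)(\lambda) = \sum_{S \subset \{1,\ldots,n\}} \Bigl\langle \Bigl(\bigotimes_{i \in S} P_i(\lambda)\Bigr) \otimes \Bigl(\bigotimes_{i \notin S} R_i(\lambda)\Bigr),\, \varphi \Bigr\rangle,
\end{equation}
and since $P_i \equiv 0$ for $i \notin S_j$, only subsets $S \subset S_j$ survive. For such an $S$, applying $\langle \delta^{(\nu_i)}(x_i), \cdot \rangle = (-1)^{\nu_i}\partial^{\nu_i}_{x_i}(\cdot)|_{x_i=0}$ for each $i \in S$, the signs $(-1)^{\nu_i}$ built into $P_i$ cancel against those produced by the derivatives. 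The surviving prefactor is $\prod_{i \in S}\frac{1}{l_i\,\nu_i!}\cdot(\lambda+\lambda_j)^{-|S|}$, and pairing $\bigotimes_{i \notin S} R_i(\lambda)$ against the restricted mixed derivative $\bigl(\partial^{\nu_i}_{x_i}\varphi\bigr)|_{x_i=0,\,i\in S}$ produces exactly the function $\rho_S(\lambda)$ displayed in the statement, provided one identifies $g_i(\lambda,\cdot)$ with the $1$-dimensional kernel representing $R_i(\lambda)$ near $-\lambda_j$. In particular $\rho_S$ is holomorphic at $-\lambda_j$, giving
\begin{equation}
G(\varphi)(\lambda) = \sum_{S \subset S_j} \frac{\rho_S(\lambda)}{(\lambda+\lambda_j)^{|S|}}.
\end{equation}

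To finish, Taylor-expand each $\rho_S(\lambda) = \sum_{m \geq 0} \frac{1}{m!}\rho_S^{(m)}(-\lambda_j)(\lambda+\lambda_j)^m$ and collect the coefficient of $(\lambda+\lambda_j)^{-k}$; only $m = |S|-k$ contributes, which forces $|S| \geq k$ and yields the claimed formula at once. The main technical obstacle is to justify the factor-by-factor splitting as a genuine identity of distribution-valued meromorphic functions rather than a merely formal manipulation: one iterates the $1$-dimensional continuation one coordinate at a time, checking at each stage that the intermediate expression is a holomorphic family of distributions on a common neighborhood of $-\lambda_j$ (achievable once $N$ is chosen large enough) and that the kernels $g_i(\lambda,x_i)$ in the complementary variables are well-defined $1$-dimensional objects in the sense required by $\rho_S$.
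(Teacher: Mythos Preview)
Your argument is correct and is precisely the approach the paper has in mind: the paper does not supply a separate proof of Proposition~\ref{KP} but sets it up in the paragraphs immediately preceding it by writing each factor as $x_{i+}^{l_i\lambda+m_i}=G_{i,N}(\lambda)+\sum_{r=1}^N\frac{(-1)^{r-1}}{(r-1)!\,l_i(\lambda+\frac{m_i+r}{l_i})}\delta^{(r-1)}(x_i)$ and substituting into the tensor product \eqref{E1}. Your $P_i/R_i$ splitting isolates, for each $i\in S_j$, the unique summand $r=\nu_i+1$ carrying a pole at $-\lambda_j$, and your distributive expansion, sign bookkeeping, and Taylor step make explicit exactly what the paper leaves to the reader; the identification of $g_i(\lambda,\cdot)$ with the holomorphic remainder $R_i(\lambda)$ matches the paper's own (somewhat informal) use of that notation.
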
 
\noindent When $k_j= \sharp S_j=n$ we have the following 
very simple expression of $a_{j,n}$. 
\begin{proposition}
If $k_j= \sharp S_j=n$, we have
\begin{equation}
a_{j,n}=\left( \prod_{i=1}^n 
\frac{1}{l_i\nu_i!} \right) 
\frac{\partial^{\nu_1+ \cdots +\nu_n}}{ 
\partial x_1^{\nu_1} \cdots 
\partial x_n^{\nu_n}} 
\varphi (0). 
\end{equation}
\end{proposition}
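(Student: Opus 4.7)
The plan is to deduce this as the boundary case of Proposition~\ref{KP} in which $k = k_j = \sharp S_j = n$, so no new analytic input beyond Section~\ref{sec:2} is required.

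First I would specialize the formula of Proposition~\ref{KP} to $k = n$. Because $\sharp S_j = n$, the only subset $S \subset S_j$ with $\sharp S \geq n$ is $S = S_j$ itself. This forces $\sharp S - k = 0$, so the $\lambda$-derivative factor drops out and the formula collapses to $a_{j,n} = \rho_{S_j}(-\lambda_j)$.

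Next I would evaluate $\rho_{S_j}(-\lambda_j)$ using the explicit expression in Proposition~\ref{KP} with $l = n$. In that case the product $\prod_{i=l+1}^n g_i(\lambda, x_i)$ is the empty product, and the locus $\{x \in \RR^n \mid x_1 = \cdots = x_n = 0\}$ is the single point $0$, so the integral degenerates to evaluation at the origin. One is then left precisely with
\[
\rho_{S_j}(\lambda) = \left(\prod_{i=1}^n \frac{1}{l_i \nu_i!}\right) \frac{\partial^{\nu_1 + \cdots + \nu_n}}{\partial x_1^{\nu_1} \cdots \partial x_n^{\nu_n}} \varphi(0),
\]
which is independent of $\lambda$ and matches the asserted formula.

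There is essentially no obstacle here, since the result is a clean reading of Proposition~\ref{KP} in its most degenerate case. The only bookkeeping point is to confirm that the displayed formula for $\rho_S$ remains valid when $l = n$ and no surviving one-dimensional factors $g_i$ appear; this is transparent from the tensor product argument in Section~\ref{sec:2}, where one simply selects the simple-pole term $\frac{(-1)^{\nu_i}}{\nu_i!\, l_i (\lambda + \lambda_j)}\,\delta^{(\nu_i)}(x_i)$ in each coordinate direction, pairs the resulting tensor product of delta-derivatives with $\varphi$, and reads off the coefficient of $(\lambda + \lambda_j)^{-n}$.
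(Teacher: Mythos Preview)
Your proposal is correct and matches the paper's intended approach: the paper states this proposition immediately after Proposition~\ref{KP} without a separate proof, treating it precisely as the degenerate case $k=k_j=n$, $S=S_j=\{1,\dots,n\}$ of that result. Your reading of the formula for $\rho_S$ when $l=n$ (empty product of $g_i$'s, integral over a single point) is exactly what is needed.
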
 
\noindent Similarly let us set 
\begin{equation}
H(\varphi)(\lambda)=\int_{\RR^n} 
 |x_{1}|^{l_1 \lambda +m_1} |x_{2}|^{l_2 \lambda +m_2} 
\cdots |x_{n}|^{l_n \lambda +m_n} 
\varphi (x) dx
\end{equation}
Then $H(\varphi)$ can be also 
extended to a meromorphic function on 
$\CC$ whose poles are 
contained in the set $P 
\subset \RR_{<0}$. Moreover 
the order of the pole of $H 
(\varphi)$ at 
$\lambda =-\lambda_j \in P$ is less 
than or equal to $k_j$. 
For a candidate pole $-\lambda_j \in P$ 
of $H(\varphi)$ let 
\begin{equation}
\frac{b_{j,k_j}}{(\lambda +\lambda_j)^{k_j}}+ 
\cdots \cdots + 
\frac{b_{j,2}}{(\lambda +\lambda_j)^{2}}+ 
\frac{b_{j,1}}{(\lambda +\lambda_j)}+ 
\cdots \cdots  \qquad (b_{j,k} \in \RR) 
\end{equation}
be the Laurent expansion of $H
(\varphi)$ at 
$\lambda =-\lambda_j$. 
\begin{proposition}
Let $1 \leq k \leq k_j$. Then 
the coefficient $b_{j,k}$ 
of $\frac{1}{(\lambda + \lambda_j)^k}$ in the 
Laurent expansion of $H(\varphi)(\lambda)$ at 
$\lambda =-\lambda_j$ is written as 
\begin{equation}
b_{j,k}=\sum_{S \subset S_j: \sharp S \geq k}
\frac{1}{(\sharp S -k)!} 
\left\{ 
\frac{\partial^{\sharp S-k}}{
 \partial \lambda^{\sharp S -k}} 
\tau_S(\lambda)\right\}_{\lambda =-\lambda_j}, 
\end{equation}
where for each subset $S \subset S_j$ of $S_j$ 
such that $\sharp S \geq k$ the function 
$\tau_S(\lambda)$ is holomorphic at 
$\lambda =-\lambda_j$ and written as follows: 
\par \noindent 
For the sake of simplicity, assume that 
$S=\{ 1,2, \ldots , l\}$ for some 
$l \geq k$. Then we have 
\begin{eqnarray}
\tau_S(\lambda) & = & 
\prod_{i=1}^l 
\frac{1 +(-1)^{\nu_i}}{ l_i \nu_i !} \times 
\int_{ \{ x \in \RR^n  | 
 x_1= \cdots = x_l=0\} } 
\prod_{i=l+1}^n g_i(\lambda , x_i) 
\nonumber\\ 
 & & 
\times 
\left\{ 
\frac{\partial^{\nu_1 + \cdots + 
 \nu_l}}{\partial x_1^{\nu_1} 
\cdots \partial x_l^{\nu_l}} 
\varphi (x) 
\right\}_{x_1= \cdots =x_l=0} 
  dx_{l+1} \cdots dx_n, 
\end{eqnarray}
where $g_i(\lambda , \cdot )$ 
($i=l+1, \ldots , n$) are as 
in Proposition \ref{KP}. 
\end{proposition}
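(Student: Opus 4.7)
The plan is to reduce to Proposition \ref{KP} by splitting each factor $|x_i|^{l_i\lambda+m_i}$ into its two one-sided components. In $\D^{\prime}(\RR)$ we have $|x_i|^{l_i\lambda+m_i} = x_{i,+}^{l_i\lambda+m_i} + x_{i,-}^{l_i\lambda+m_i}$, and expanding the resulting tensor product gives
\begin{equation*}
H(\varphi)(\lambda) = \sum_{\epsilon\in\{+,-\}^n}
\left\langle x_{1,\epsilon_1}^{l_1\lambda+m_1}\otimes\cdots\otimes x_{n,\epsilon_n}^{l_n\lambda+m_n},\varphi\right\rangle,
\end{equation*}
so $H(\varphi)$ is a sum of $2^n$ meromorphic functions, each of exactly the type handled by Proposition \ref{KP}.

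For a fixed sign vector $\epsilon$, I would compute the Laurent coefficient at $\lambda = -\lambda_j$ of the corresponding summand by performing, for each index $i$ with $\epsilon_i = -$, the change of variables $x_i \mapsto -x_i$. This replaces the summand by one directly covered by Proposition \ref{KP} applied to the test function $\varphi(\ldots,-x_i,\ldots)$. The effect is twofold: for indices $i \in S$, the derivative $\partial_{x_i}^{\nu_i}\varphi|_{x_i=0}$ acquires an extra factor $(-1)^{\nu_i}$, while for indices $i \notin S$, the integrable tail over the $i$-th variable is simply reflected from $(0,\infty)$ to $(-\infty,0)$.

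Summing over the $2^n$ sign vectors then produces the stated formula: for each $i \in S$, the two choices $\epsilon_i = \pm$ contribute the factor $1 + (-1)^{\nu_i}$, yielding the product $\prod_{i=1}^l \frac{1+(-1)^{\nu_i}}{l_i\nu_i!}$; for each $i \notin S$, the reflected tails glue into the same integrable function $g_i(\lambda,x_i)$ now defined on all of $\RR$, which is why the integration in $\tau_S(\lambda)$ ranges over the full affine subspace $\{x_1=\cdots=x_l=0\}\subset\RR^n$. The only real obstacle is bookkeeping: one must check that the Laurent expansions of the $2^n$ pieces can be combined term by term and that the resulting functions $\tau_S(\lambda)$ are genuinely holomorphic at $\lambda=-\lambda_j$. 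Both points are immediate consequences of Proposition \ref{KP}, since each summand's $\rho_{S,\epsilon}(\lambda)$ is already holomorphic there and a finite sum of holomorphic functions is holomorphic.
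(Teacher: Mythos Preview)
Your approach is correct and matches the paper's own (implicit) method. The paper does not give a separate proof of this proposition; it is presented as the evident analogue of Proposition \ref{KP}, obtained by using the one-dimensional decomposition $|x_i|^{l_i\lambda+m_i}=x_{i,+}^{l_i\lambda+m_i}+x_{i,-}^{l_i\lambda+m_i}$ together with the residue formulas \eqref{R1} and \eqref{R2} already established, which is exactly the splitting-and-reflection argument you outline.
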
 
As a special case of this proposition, 
we obtain the following. 
\begin{corollary}
Let $1 \leq k \leq k_j$. 
Assume that for any $S \subset S_j$ 
with $\sharp S =k$ there exists 
$i \in S$ such that $\nu_i$ is odd. 
Then we have $b_{j,k}= \cdots =b_{j,k_j}=0$. 
\end{corollary}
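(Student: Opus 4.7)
The plan is to deduce the corollary directly from the explicit formula for $b_{j,k}$ provided in the preceding proposition, exploiting the sign factor $1+(-1)^{\nu_i}$ that appears in every summand.

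First, I would fix $k' \in \{k, k+1, \ldots, k_j\}$ and apply the proposition to write
\begin{equation}
b_{j,k'}=\sum_{S \subset S_j:\, \sharp S \geq k'}
\frac{1}{(\sharp S -k')!}
\left\{
\frac{\partial^{\sharp S-k'}}{ \partial \lambda^{\sharp S -k'}}
\tau_S(\lambda)\right\}_{\lambda =-\lambda_j}.
\end{equation}
The crucial observation is that, regardless of which coordinates are grouped into $S$, the function $\tau_S(\lambda)$ carries the overall constant prefactor $\prod_{i \in S}\frac{1+(-1)^{\nu_i}}{l_i\nu_i!}$. Hence if even a single $i \in S$ has $\nu_i$ odd, then $\tau_S(\lambda)$ is identically zero as a function of $\lambda$, and all its $\lambda$-derivatives at $\lambda=-\lambda_j$ vanish.

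Next, I would show that the hypothesis of the corollary, phrased for subsets of cardinality exactly $k$, propagates upward to all larger subsets of $S_j$. Given any $S' \subset S_j$ with $\sharp S' \geq k' \geq k$, I can pick an arbitrary subset $S \subset S'$ with $\sharp S = k$. By assumption there exists $i \in S$ with $\nu_i$ odd; since $S \subset S'$, this same $i$ lies in $S'$, so the prefactor for $\tau_{S'}$ contains the zero factor $1+(-1)^{\nu_i}=0$. Therefore $\tau_{S'} \equiv 0$ for every $S' \subset S_j$ with $\sharp S' \geq k$, and in particular with $\sharp S' \geq k'$.

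Combining these two observations, every term in the displayed sum for $b_{j,k'}$ vanishes, so $b_{j,k'}=0$ for each $k' \in \{k,\ldots,k_j\}$. I do not anticipate any genuine obstacle: the whole proof is a bookkeeping argument riding on the $1+(-1)^{\nu_i}$ prefactor supplied by the previous proposition, together with the elementary monotonicity of the hypothesis in $\sharp S$.
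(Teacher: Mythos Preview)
Your argument is correct and is exactly the derivation the paper intends: the corollary is stated immediately after the proposition as ``a special case'' with no separate proof, and the implied reasoning is precisely that the constant prefactor $\prod_{i\in S}\frac{1+(-1)^{\nu_i}}{l_i\nu_i!}$ kills $\tau_S$ whenever some $\nu_i$ with $i\in S$ is odd. Your observation that the hypothesis for subsets of size exactly $k$ automatically propagates to all larger subsets (by choosing any size-$k$ subset inside) is the only nontrivial step, and it is handled correctly.
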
 
If $k_j= \sharp S_j=n$, we 
can also obtain the following 
explicit expression of $b_{j,n}$. 
\begin{proposition}
If $k_j= \sharp S_j=n$, we have
\begin{equation}
b_{j,n}=\left( \prod_{i=1}^n
 \frac{1+(-1)^{\nu_i} 
}{l_i\nu_i! } \right) 
\frac{\partial^{\nu_1+ \cdots +\nu_n}}{ 
\partial x_1^{\nu_1} \cdots 
\partial x_n^{\nu_n}} 
\varphi (0). 
\end{equation}
\end{proposition}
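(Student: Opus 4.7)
The plan is to deduce this proposition as an immediate specialization of the preceding formula for the Laurent coefficients $b_{j,k}$, taking $k = n$ under the hypothesis $k_j = \sharp S_j = n$.

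First I would observe that with $\sharp S_j = n$ one necessarily has $S_j = \{1, 2, \ldots, n\}$, so the only subset $S \subseteq S_j$ with $\sharp S \geq k = n$ is $S = S_j$ itself, and the sum defining $b_{j,n}$ in the previous proposition collapses to a single term. Since also $\sharp S - k = 0$, no $\lambda$-derivative is applied and $b_{j,n} = \tau_{S_j}(-\lambda_j)$. Next I would evaluate $\tau_{S_j}$ with $l = n$: the product $\prod_{i=l+1}^{n} g_i(\lambda, x_i)$ is an empty product, and the integration over $\{x \in \RR^n \mid x_1 = \cdots = x_n = 0\}$ collapses to evaluation at the origin with no remaining variables $x_{l+1}, \ldots, x_n$. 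Consequently $\tau_{S_j}(\lambda)$ is actually independent of $\lambda$ and equals precisely $\prod_{i=1}^{n} \frac{1+(-1)^{\nu_i}}{l_i \nu_i!} \cdot \frac{\partial^{\nu_1+\cdots+\nu_n} \varphi}{\partial x_1^{\nu_1} \cdots \partial x_n^{\nu_n}}(0)$, which is exactly the claimed formula.

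Alternatively, a self-contained direct proof can be given in parallel with the argument for $G(\varphi)$ in Section \ref{sec:2}. Decompose $|x_i|^{l_i \lambda + m_i} = x_{i+}^{l_i \lambda + m_i} + x_{i-}^{l_i \lambda + m_i}$ as distributions on $\RR$, write $H(\varphi)$ as a sum of $2^n$ orthant integrals, apply the one-dimensional Gelfand--Shilov expansion to each tensor factor, and extract the coefficient of $(\lambda+\lambda_j)^{-n}$. Only the polar terms from every factor can produce a pole of this maximal order; by \eqref{R1} the $+$ half contributes with sign $+1$, while by \eqref{R2} the $-$ half contributes with sign $(-1)^{\nu_i}$, so summing over the $2^n$ orthants yields the factor $\prod_{i=1}^{n} (1 + (-1)^{\nu_i})$ in front of the common mixed partial derivative of $\varphi$ at $0$.

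Neither approach presents a substantial obstacle. The only point requiring care is the bookkeeping of signs produced by residues on the two half-lines (in the direct proof), or equivalently the empty-product and point-integration conventions in the formula for $\tau_{S_j}$ (in the corollary-style proof); in both formulations the tensor-product structure of the integrand cleanly separates the variables so that everything reduces to the already-established one-dimensional residue formulas.
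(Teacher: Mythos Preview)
Your proposal is correct and matches the paper's approach: the proposition is stated in the paper as an immediate specialization of the preceding general formula for $b_{j,k}$, and your first argument (collapsing the sum to $S=S_j$, taking no $\lambda$-derivative, and reducing the empty product and point-integration in $\tau_{S_j}$) is exactly that specialization. Your alternative orthant-by-orthant argument is also valid and merely repeats at the level of $H(\varphi)$ the tensor-product residue computation the paper uses to derive the general formula.
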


\section{Vanishing theorems for the poles of 
local zeta functions}\label{sec:3}

Let $f$ be a real-valued real analytic function 
defined on an open neighborhood $U$ of $0 \in \RR^n$. 
Then for any real-valued test function 
$\varphi \in C_0^{\infty}(U)$ the integral 
\begin{equation}
Z_f(\varphi)(\lambda) =\int_{\RR^n} 
 |f(x)|^{\lambda} \varphi (x)dx 
\end{equation}
converges locally uniformly 
on $\{ \lambda \in \CC \ | \ 
\Re \lambda >0\}$ and defines a holomorphic 
function there. Moreover it is well-known 
that $Z_f(\varphi)(\lambda)$ can be extended 
to a meromorphic function defined on the whole 
complex plane $\CC$ (see 
\cite{A-G-Z-V}, \cite{Kan} and \cite{V} etc.). 
In this section, assuming that the real hypersurface 
$\{ x \in U \ | \ f(x)=0 \}$ has an isolated 
singular point at $0 \in U \subset \RR^n$, 
we prove some general vanishing theorems on 
the poles of the local zeta function 
$Z_f(\varphi)(\lambda)$.  First, let 
\begin{equation}
f(x)=\sum_{\alpha \in \ZZ_+^n} a_{\alpha} x^{\alpha} 
\qquad (a_{\alpha} \in \RR) 
\end{equation}
be the Taylor expansion of $f$ at $0 \in \RR^n$. 
\begin{definition}
\par \noindent (i) 
Let $\Gamma_+(f) \subset \RR_+^n$ be the convex hull 
of $\bigcup_{\alpha : a_{\alpha} \not= 0} ( \alpha + 
\RR_+^n)$ in $\RR_+^n=\RR^n_{\geq 0}$. We call 
$\Gamma_+(f)$ the Newton polygon of $f$. 
\par \noindent (ii) 
For each compact face $\gamma \prec \Gamma_+(f)$ 
of $\Gamma_+(f)$ we set 
\begin{equation}
f_{\gamma}(x)=\sum_{\alpha \in \gamma \cap \ZZ_+^n} 
a_{\alpha} x^{\alpha} 
\in \RR [x_1, x_2, \ldots , x_n]. 
\end{equation}
We call $f_{\gamma}(x)$ the $\gamma$-part of $f$. 
\end{definition}
From now on, we assume that $f$ satisfies 
the following condition. 
\begin{definition}\label{ND} 
We say that $f$ is non-degenerate if for any 
compact face $\gamma \prec \Gamma_+(f)$ 
of $\Gamma_+(f)$ we have: 
\begin{equation}
\left( \frac{\partial f_{\gamma}}{\partial x_1}(x), 
\frac{\partial f_{\gamma}}{\partial x_2}(x), 
\cdots \cdots , 
\frac{\partial f_{\gamma}}{\partial x_n}(x) \right) 
\not= (0,0, \cdots \cdots , 0)
\end{equation}
at any point $x$ of $\{ x\in \RR^n \ | \ 
x_1x_2 \cdots x_n \not= 0, f_{\gamma}(x)=0 \} 
\subset (\RR \setminus \{ 0\} )^n$. 
\end{definition}
For the sake of simplicity, let us assume also 
that $f$ is convenient: for any 
$1 \leq i \leq n$ we have 
$\Gamma_+(f) \cap 
\{ \alpha \in \RR_+^n \ | \ 
\alpha_j=0 \ \text{for} \ j \not= i \ 
\text{and} \ \alpha_i>0 \} \not= \emptyset$. 
Then by the results of Varchenko \cite{V} 
(see also \cite{A-G-Z-V} and 
\cite{Kan}) we can describe 
the candidate poles of $Z_f(\varphi)$ 
in terms of $\Gamma_+(f)$ 
as follows. First Let $\Sigma_0$ be the 
dual fan of $\Gamma_+(f)$ and $\Sigma$ 
a subdivision of $\Sigma_0$ such that the 
real toric variety $X_{\Sigma}$ associated 
to it is smooth. Since $\Sigma =\{ \sigma \}$ 
is a family of rational polyhedral convex 
cones in $\RR_+^n$ such that $\RR_+^n= 
\cup_{\sigma \in \Sigma}\sigma$ there exists 
a natural proper 
morphism $\pi : X_{\Sigma} \longrightarrow 
\RR^n$ of real analytic manifolds. 
By using the convenience of $f$, we can 
construct a smooth fan 
$\Sigma$ such that $\pi$ induces a diffeomorphism 
$X_{\Sigma} \setminus \pi^{-1}(0) \simto 
\RR^n \setminus \{ 0\} $. Moreover by the 
non-degeneracy of $f$, the pull-back 
$f \circ \pi : X_{\Sigma} \longrightarrow \RR$ 
of $f$ to $X_{\Sigma}$ 
defines a hypersurface $\{ f \circ \pi =0\}$ 
in $X_{\Sigma}$ with 
only normal crossing singularities. 
In order to describe 
$\pi : X_{\Sigma} \longrightarrow 
\RR^n$ and $f \circ \pi $ 
we shall prepare some notations. 
By the smoothness of $X_{\Sigma}$, on 
any cone $\sigma \in \Sigma$ such that 
$\d \sigma =k$ there exist 
exactly $k$ edges (i.e. $1$-dimensional 
faces). Let $a^1(\sigma), a^2(\sigma), 
\ldots , a^k(\sigma) \in \partial \sigma 
\cap (\ZZ^n \setminus \{ 0\} )$ be 
the non-zero primitive 
vectors on these edges of $\sigma$. 
We call $\{  a^1(\sigma), a^2(\sigma), 
\ldots , a^k(\sigma)  \}$ the $1$-skelton of 
$\sigma$. For each $n$-dimensional cone 
$\sigma \in \Sigma$ we fix 
the ordering 
of its $1$-skelton $\{  a^1(\sigma), a^2(\sigma), 
\ldots , a^n(\sigma)  \}$ so that the 
determinant of the invertible matrix 
\begin{equation}
A(\sigma)=\{ a^i(\sigma)_j \}_{i,j=1}^n 
\in GL_n(\ZZ)
\end{equation}
is $1$. For a cone $\sigma \in \Sigma$ and 
$1 \leq i \leq \d \sigma$ we set 
\begin{equation}
l(a^i(\sigma))=\min_{\alpha \in \Gamma_+(f)} 
\langle a^i(\sigma), \alpha \rangle \in \ZZ_+ 
\end{equation}
and 
\begin{equation}
 |a^i(\sigma)| =\sum_{j=1}^n a^i(\sigma)_j 
\in \ZZ_{>0}. 
\end{equation}
Now let $\sigma$ be 
an $n$-dimensional cone in $\Sigma$ 
and $\RR^n(\sigma) \simeq \RR^n_y$ 
 the affine open subset of $X_{\Sigma}$ 
associated to $\sigma$. Then the restriction 
$\pi (\sigma): \RR^n(\sigma) 
\longrightarrow \RR^n$ of 
$\pi : X_{\Sigma} \longrightarrow 
\RR^n$ to $\RR^n(\sigma) \simeq \RR^n_y$ 
and its Jacobian $J(\pi (\sigma)): \RR^n(\sigma) 
\longrightarrow \RR$ 
are explicitly given by 
\begin{equation}
\pi (\sigma)(y)=\left( 
\prod_{i=1}^n y_i^{a^i(\sigma)_1}, 
\prod_{i=1}^n y_i^{a^i(\sigma)_2}, 
\cdots , 
\prod_{i=1}^n y_i^{a^i(\sigma)_n}
\right),
\end{equation}
\begin{equation}
J(\pi (\sigma))(y)=
y_1^{|a^1(\sigma)|-1} y_2^{|a^2(\sigma)|-1} 
\cdots y_n^{|a^n(\sigma)|-1}. 
\end{equation}
Moreover on $\RR^n(\sigma) \simeq \RR^n_y$ 
we have 
\begin{equation}
(f \circ \pi (\sigma))(y)=
f_{\sigma}(y) \times \prod_{i=1}^n 
y_i^{l(a^i(\sigma))}, 
\end{equation}
where $f_{\sigma}$ is a real analytic 
function defined on $\pi (\sigma)^{-1}(U) 
\subset \RR^n(\sigma)$. By the non-degeneracy 
of $f$ the hypersurface $\{ f_{\sigma}=0\}$ 
intersect all coordinate subspaces 
of $\RR^n(\sigma)$ transversally. In 
particular, we have $f_{\sigma}(0)\not= 0$. 
For $0 \leq k \leq n$ let $\Sigma^{(k)} 
\subset \Sigma$ be the subset of 
$\Sigma$ consisting of $k$-dimensional 
cones. 
\begin{definition}
(see \cite{A-G-Z-V}, \cite{Kan} 
 and \cite{V} etc.) 
Let $P \subset \QQ_{<0}$ be the union of 
the following subsets of $\QQ_{<0}$: 
\begin{equation}
\{ -1,-2, -3, \cdots \cdots \}, 
\end{equation}
\begin{equation}
\left\{ -\frac{|a^1(\sigma)|}{l(a^1(\sigma))}, 
-\frac{|a^1(\sigma)|+1}{l(a^1(\sigma))}, 
\cdots \cdots \right\} 
\qquad 
(\sigma \in \Sigma^{(1)} 
 \ \text{such that} \ 
 l(a^1(\sigma))>0). 
\end{equation}
\end{definition} 
By the results of \cite{V}, the poles of the 
local zeta function $Z_f(\varphi)$ are contained 
in the set $P$. An element of $P$ is called a 
candidate pole of $Z_f(\varphi)$. We order 
the candidate poles of $Z_f(\varphi)$ as 
\begin{equation}
P= 
\{ -\lambda_1 > -\lambda_2 > - \lambda_3 > 
 \cdots \cdots \} \qquad (\lambda_j \in \QQ_{>0}). 
\end{equation}
Hereafter we fix a candidate pole $-\lambda_j 
\in P$ of $Z_f(\varphi)$. 
\begin{definition}
\par \noindent (i) 
Let $\sigma \in \Sigma$. For $1 \leq i \leq \d \sigma$ 
such that $l(a^i(\sigma))>0$ ($\Longleftrightarrow 
a^i(\sigma) \in \RR_{>0}^n$) we set 
\begin{equation}
K^i(\sigma)=\left\{ 
\frac{|a^i(\sigma)|}{l(a^i(\sigma))}, 
\frac{|a^i(\sigma)|+1}{l(a^i(\sigma))}, 
\cdots \cdots \right\} \subset \QQ_{>0}. 
\end{equation}
\par \noindent (ii) 
For the candidate pole $-\lambda_j \in P$ 
of $Z_f(\varphi)$ and $0 \leq k \leq n$ we 
define a subset $\Sigma^{(k)}_j$ of $\Sigma^{(k)}$ 
by 
\begin{equation}
\Sigma^{(k)}_j=\{ \sigma \in \Sigma^{(k)} \ | \ 
l(a^i(\sigma))>0 \ \text{and} \ 
\lambda_j \in K^i(\sigma) 
\ \text{for} \ 1 \leq i \leq k \}. 
\end{equation}
\par \noindent (iii) 
For $\sigma \in \Sigma^{(k)}_j$ and $1 \leq i \leq k$ 
we define a non-negative integer $\nu(\sigma)_i 
\in \ZZ_+$ by 
\begin{equation}
\lambda_j=\frac{ |a^i(\sigma)| + \nu(\sigma)_i}{ 
l(a^i(\sigma))}. 
\end{equation}
\end{definition}
After \cite{A-G-Z-V} and \cite{V} the following 
result is well-known to the specialists. 
\begin{theorem}
\par \noindent (i) 
Assume that $\lambda_j \notin \ZZ$. Then 
the order of the pole of $Z(\varphi)$ at 
$\lambda =-\lambda_j$ is less than or equal to 
\begin{equation}
k_j:= \max \{ 0 \leq k \leq n \ | \ 
\Sigma_j^{(k)} \not= \emptyset \} \in \ZZ_+. 
\end{equation}
\par \noindent (ii) 
Assume that $\lambda_j \in \ZZ$. Then 
the order of the pole of $Z(\varphi)$ at 
$\lambda =-\lambda_j$ is less than or equal to 
\begin{equation}
k_j:= 1+ \max \{ 0 \leq k \leq n \ | \ 
\Sigma_j^{(k)} \not= \emptyset \} \in \ZZ_+. 
\end{equation}
\end{theorem}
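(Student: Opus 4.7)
The approach is to pull $Z_f(\varphi)(\lambda)$ back along the toric resolution $\pi : X_\Sigma \to \RR^n$ already constructed in the text, and reduce each local integral to the monomial framework of Section \ref{sec:2}. Choosing a smooth partition of unity $\{\rho_\sigma\}_{\sigma \in \Sigma^{(n)}}$ of $X_\Sigma$ subordinate to the charts $\RR^n(\sigma) \simeq \RR^n_y$, and using the explicit formulas recorded in the text for $f \circ \pi(\sigma)$ and $J(\pi(\sigma))$, we obtain
\begin{equation*}
Z_f(\varphi)(\lambda) = \sum_{\sigma \in \Sigma^{(n)}} \int_{\RR^n(\sigma)} |f_\sigma(y)|^\lambda \prod_{i=1}^n |y_i|^{l(a^i(\sigma)) \lambda + |a^i(\sigma)| - 1} \Phi_\sigma(y) \, dy
\end{equation*}
with $\Phi_\sigma \in C_0^\infty(\RR^n(\sigma))$. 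Refining the partition of unity further, I would localize each summand near an individual point $y_0 \in \RR^n(\sigma)$ and set $J_0 = \{i : (y_0)_i = 0\}$.

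Two cases then arise. When $f_\sigma(y_0) \neq 0$ (in particular at $y_0 = 0$, where $f_\sigma(0) \neq 0$ by non-degeneracy), the factor $|f_\sigma|^\lambda$ is an entire function of $\lambda$ on a neighborhood of $y_0$, so the local integral has exactly the form $H(\varphi)(\lambda)$ of Section \ref{sec:2} with exponents $l_i = l(a^i(\sigma))$, $m_i = |a^i(\sigma)| - 1$ for $i \in J_0$. When $f_\sigma(y_0) = 0$, non-degeneracy forces $\{f_\sigma = 0\}$ to meet $\{y_i = 0 : i \in J_0\}$ transversally; the implicit function theorem then yields local analytic coordinates in which $\{y_i\}_{i \in J_0}$ together with a new coordinate $w := f_\sigma$ form part of a coordinate system, turning the local integrand into $\prod_{i \in J_0} |y_i|^{l_i \lambda + m_i} \cdot |w|^\lambda$ times a smooth compactly supported factor absorbing the Jacobian. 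This again fits the Section \ref{sec:2} framework, now with an additional $1$-dimensional factor of weight $l = 1$, $m = 0$ coming from $|w|^\lambda$.

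Applying the pole-order bound of Section \ref{sec:2}, the order of the pole at $\lambda = -\lambda_j$ of each local piece is at most the number of $1$-dimensional factors whose candidate pole set contains $-\lambda_j$. In the first case this count equals $\#\{i \in J_0 : l(a^i(\sigma)) > 0,\ \lambda_j \in K^i(\sigma)\}$; since $\{a^i(\sigma) : i \in J_0\}$ is the $1$-skeleton of a face of $\sigma$ in $\Sigma$ and, after reordering, the qualifying indices carve out a subface lying in some $\Sigma_j^{(k)}$, this count is at most $\max\{k : \Sigma_j^{(k)} \neq \emptyset\}$. In the second case the extra factor $|w|^\lambda$ adds $-\lambda_j$ as a candidate pole precisely when $\lambda_j \in \ZZ_{>0}$, raising the bound by exactly $1$ in the integer case. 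This yields $k_j = \max\{k : \Sigma_j^{(k)} \neq \emptyset\}$ in case (i) and $k_j = 1 + \max\{k : \Sigma_j^{(k)} \neq \emptyset\}$ in case (ii), as claimed. The main obstacle is arranging the coordinate change in the second case so that the monomial factors $|y_i|^{l_i \lambda + m_i}$ for $i \in J_0$ are preserved while $|f_\sigma|^\lambda$ becomes $|w|^\lambda$ in a form compatible with Section \ref{sec:2}; transversality of $\{f_\sigma = 0\}$ with the stratum $\{y_i = 0, i \in J_0\}$ is exactly what permits this via the implicit function theorem.
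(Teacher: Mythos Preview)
Your approach is correct and coincides with the paper's own. The paper does not give a separate proof of this theorem, citing it as well-known from \cite{A-G-Z-V} and \cite{V}; however, the computation you outline is precisely what the paper carries out in the proof of Theorem~\ref{MT}: pull back by the toric resolution, take a finite partition of unity subordinate to the charts $\RR^n(\sigma_q)$, and in each chart distinguish (a) pieces where $\supp\varphi_q$ misses $\{f_{\sigma_q}=0\}$ along the relevant coordinate stratum, reducing to the monomial distribution $H(\varphi)(\lambda)$ of Section~\ref{sec:2}, and (b) pieces meeting $\{f_{\sigma_q}=0\}$, handled via a local analytic change $\Phi$ fixing the coordinates $y_1,\ldots,y_r$ that may vanish on the support and sending $\{f_{\sigma_q}=0\}$ to $\{z_n=0\}$, which introduces the extra factor $|z_n|^\lambda$ with $l=1$, $m=0$ responsible for the $+1$ in case (ii). Your identification of the face generated by the qualifying rays with an element of $\Sigma_j^{(k)}$ is exactly the bookkeeping the paper encodes in the sets $\Sigma_{j,q}^{(l)}$.
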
 
\noindent Now let 
\begin{equation}
\frac{a_{j,k_j}(\varphi) }{(\lambda +\lambda_j)^{k_j}}+ 
\cdots \cdots + 
\frac{a_{j,2}(\varphi)}{(\lambda +\lambda_j)^{2}}+ 
\frac{a_{j,1}(\varphi)}{(\lambda +\lambda_j)}+ 
\cdots \cdots  \qquad (a_{j,k}(\varphi) \in \RR) 
\end{equation}
be the Laurent expansion of $Z_f(\varphi)$ at 
$\lambda =-\lambda_j$. Then we obtain the 
following result which generalizes that of 
Jacobs in \cite{J}. 
\begin{theorem}\label{MT} 
\par \noindent (i) 
Assume that $\lambda_j$ is not an odd integer and 
let $1 \leq k \leq k_j$. Assume moreover that 
for any $\sigma \in \Sigma^{(k)}_j$ there exists 
$1 \leq i \leq k$ such that $\nu(\sigma)_i$ is 
odd. Then we have $a_{j,k}(\varphi)= \cdots = 
a_{j,k_j}(\varphi)=0$. 
\par \noindent (ii) 
Assume that $\lambda_j$ is an odd integer and 
let $2 \leq k \leq k_j$. Assume moreover that 
for any $\sigma \in \Sigma^{(k-1)}_j$ there exists 
$1 \leq i \leq k-1$ such that $\nu(\sigma)_i$ is 
odd. Then we have $a_{j,k}(\varphi)= \cdots = 
a_{j,k_j}(\varphi)=0$. 
\end{theorem}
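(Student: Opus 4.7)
The strategy is to reduce $Z_f(\varphi)$ to a sum of integrals of the type treated in Section \ref{sec:2} via the embedded resolution $\pi : X_\Sigma \to \RR^n$, and to then read the vanishing off the $(1+(-1)^{\nu_i})$-factors that appear in the Laurent coefficients of $H(\varphi)$ computed there.

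First I would pull $Z_f(\varphi)$ back through $\pi$ and insert a partition of unity subordinate to the affine open cover $\{\RR^n(\tau)\}_{\tau\in\Sigma^{(n)}}$ of $X_\Sigma$. By the explicit formulas for $\pi(\tau)$, $J(\pi(\tau))$ and $f\circ\pi(\tau)$ recalled in Section \ref{sec:3}, the integrand on each chart $\RR^n(\tau)$ becomes
\begin{equation*}
|f_\tau(y)|^\lambda \, \prod_{i=1}^n |y_i|^{l(a^i(\tau))\lambda+|a^i(\tau)|-1}
\end{equation*}
multiplied by a smooth compactly supported function of $y$. I would further localize each chart so that each local piece is supported either (a) away from $\{f_\tau=0\}$, where $|f_\tau|^\lambda$ is a smooth $\lambda$-holomorphic factor and the integral falls directly under the $|x|$-analogue of Proposition \ref{KP}; or (b) near a point of $\{f_\tau=0\}$, where by non-degeneracy $\{f_\tau=0\}$ is transverse to the coordinate hyperplanes and can be straightened by a local change of variables $z_0=f_\tau(y)$, $z_i=y_i$, bringing the integral again into the form of Section \ref{sec:2} but now with one extra factor $|z_0|^\lambda$.

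Next I match pole-contributing strata with cones of $\Sigma_j$. A pole of order $\geq k$ at $\lambda=-\lambda_j$ from a case (a) localization on $\RR^n(\tau)$ requires at least $k$ coordinates $y_i$ with $l(a^i(\tau))>0$ and $\lambda_j\in K^i(\tau)$ to vanish simultaneously; the corresponding edges $a^i(\tau)$ then span a $k$-face $\sigma\in\Sigma^{(k)}_j$ of $\tau$. The $|x|$-analogue of Proposition \ref{KP}, applied on this chart, shows that the contribution of such a stratum to $a_{j,k}(\varphi)$ is proportional to
\begin{equation*}
\prod_{i=1}^k \bigl(1+(-1)^{\nu(\sigma)_i}\bigr),
\end{equation*}
which vanishes as soon as some $\nu(\sigma)_i$ is odd. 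Meanwhile the auxiliary factor $|z_0|^\lambda$ from case (b) contributes an extra unit of pole order at $\lambda=-\lambda_j$ precisely when $\lambda_j\in\ZZ_{>0}$ with $1+(-1)^{\lambda_j-1}\neq 0$, i.e.\ exactly when $\lambda_j$ is an odd positive integer; this is the origin of the dichotomy in the statement.

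Under the hypothesis of (i), $\lambda_j\notin 2\ZZ+1$ forces case (b) to add no pole order, so every contribution to $a_{j,k}(\varphi)$ is indexed by $\sigma\in\Sigma^{(k)}_j$ and carries the vanishing factor above. The hypothesis automatically propagates to $\Sigma^{(m)}_j$ for every $m\geq k$, since every $k$-face of a cone in $\Sigma^{(m)}_j$ lies in $\Sigma^{(k)}_j$ with the same values of the $\nu(\cdot)_i$, so $a_{j,k}(\varphi)=\cdots=a_{j,k_j}(\varphi)=0$. Case (ii) is identical with $\Sigma^{(k-1)}_j$ in place of $\Sigma^{(k)}_j$, the odd-integer factor $|z_0|^\lambda$ supplying the missing unit of pole order and the vanishing product ranging over $1\leq i\leq k-1$. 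The main obstacle I anticipate is the bookkeeping in case (b): one must verify that the straightening $z_0=f_\tau$ does not introduce parasitic pole-contributing factors and that each torus-orbit stratum is accounted for exactly once across the charts $\tau\supset\sigma$. Once this routine but careful check is in place, the desired vanishings follow term-by-term from the identity $1+(-1)^{\text{odd}}=0$.
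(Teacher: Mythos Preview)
Your proposal is correct and follows essentially the same route as the paper: pull back through the toric resolution with a partition of unity, split each chart into pieces away from and near the strict transform $\{f_\tau=0\}$, straighten the latter so that the integrand becomes a product $\prod_i|y_i|^{l_i\lambda+m_i}$ times a smooth factor, and read off the vanishing from the factors $\prod_i(1+(-1)^{\nu(\sigma)_i})$ supplied by the $|x|$-analogue of Proposition \ref{KP}, with the extra $|f_\tau|^\lambda$-factor producing the odd-integer shift in part (ii). The paper differs only cosmetically: it indexes the contributions to $a_{j,k}(\varphi)$ directly by $l$-faces $\sigma\in\Sigma^{(l)}_j$ for all $l\geq k$ (writing out the holomorphic functions $\rho_{q,\sigma}$, $\tau_{q,\sigma}$ explicitly) rather than by $k$-faces plus your propagation argument, and it realizes the straightening as a change $\Phi$ sending $\{f_\tau=0\}$ to $\{z_n=0\}$ while keeping the exceptional coordinates $z_i=y_i$ fixed, rather than introducing an auxiliary $z_0$.
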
 

\begin{proof}
(i) Since $\supp \varphi$ is compact and $\pi : 
X_{\Sigma} \longrightarrow \RR^n$ is proper, 
there exists finite $C^{\infty}$-functions 
$\varphi_q$ ($1 \leq q \leq N$) on $X_{\Sigma}$ 
such that $\sum_{q=1}^N \varphi_q \equiv 1$ on 
$\supp (\varphi \circ \pi)$. We may assume 
that for any $1 \leq q \leq N$ there exists 
an $n$-dimensional cone $\sigma_q \in \Sigma^{(n)}$ 
such that $\supp \varphi_q 
\subset \subset \RR^n(\sigma_q)$. Then we have 
\begin{eqnarray}
& & Z_f(\varphi)(\lambda)= 
\nonumber\\
& = & 
\sum_{q=1}^N \int_{\RR^n(\sigma_q)} 
\prod_{i=1}^n |y_i|^{l(a^i(\sigma_q))\lambda +
|a^i(\sigma_q)|-1} \times 
 |f_{\sigma_q}|^{\lambda} (y) \times 
(\varphi \circ \pi(\sigma_q))(y)
 \varphi_q(y) dy.  
\end{eqnarray}
We divide the proof of (i) into the 
following two case. 
\medskip 
\par 
\noindent (I) First assume that $\lambda_j$ 
is not an integer. Then by (the proof of) 
Proposition \ref{KP}, $a_{j,k}(\varphi)$ can 
be written as a
\begin{equation}
a_{j,k}(\varphi)= 
\sum_{q=1}^N \sum_{l \geq k} 
\sum_{\sigma \in \Sigma^{(l)}_{j,q}} 
J_q(\sigma), 
\end{equation} 
where for $1 \leq q \leq N$ and $0 \leq l \leq n$ 
we set 
\begin{equation}
\Sigma^{(l)}_{j,q}=\{ \sigma \in \Sigma^{(l)}_j 
 \ | \ \sigma \prec \sigma_q \}. 
\end{equation} 
Moreover for $l$ such that $k \leq l \leq n$ 
and $\sigma \in 
\Sigma^{(l)}_{j,q}$ the number $J_q(\sigma)$ 
is expressed as follows. 
\begin{equation}
J_q(\sigma)=\frac{1}{(l-k)!} \times 
\frac{d^{l-k}}{d \lambda^{l-k}}
 \rho_{q, \sigma}(\lambda) |_{\lambda =-\lambda_j}. 
\end{equation} 
Let us explain the 
function $\rho_{q, \sigma}(\lambda)$ 
which is holomorphic at $\lambda =-\lambda_j$. 
For the sake of 
simplicity, we assume that $\{ a^1(\sigma_q), 
a^2(\sigma_q), \ldots , a^l(\sigma_q)\}$ 
is the $1$-skelton of $\sigma \prec \sigma_q$. 
\medskip 
\par 
\noindent (a) (The case where $\supp \varphi_q 
\cap \{ y \in \RR^n(\sigma_q) \ | \ 
f_{\sigma_q}(y)= 
y_1=y_2= \cdots =y_l=0 \} = \emptyset$) 
We set 
\begin{eqnarray}
 & \rho_{q, \sigma}(\lambda) & = 
 \prod_{i=1}^l 
\frac{1+(-1)^{\nu(\sigma_q)_i}}{ 
l(a^i(\sigma_q))  \nu(\sigma_q)_i !}
\int_{ \{ y_1= \cdots = y_l=0\} } 
\prod_{i=l+1}^n g_i(\lambda , y_i) 
\nonumber\\ 
 & \times & 
\frac{\partial^{\nu(\sigma_q)_1 + \cdots + 
 \nu(\sigma_q)_l}}{\partial y_1^{\nu(\sigma_q)_1} 
\cdots \partial y_l^{\nu(\sigma_q)_l}} 
 \left\{ 
 |f_{\sigma_q}|^{\lambda} (\varphi \circ 
\pi (\sigma_q)) \varphi_q
 \right\}_{y_1= \cdots =y_l=0} 
  dy_{l+1} \cdots dy_n, 
\end{eqnarray}
where $g_{l+1}(\lambda , \cdot), \ldots , 
g_n(\lambda , \cdot )$ are $1$-dimensional 
integrable functions with holomorphic parameter 
$\lambda$ at $\lambda
 =-\lambda_j \in P$. 
\medskip 
\par 
\noindent (b) (The case where $\supp \varphi_q 
\cap \{ y \in \RR^n(\sigma_q) \ | \ 
f_{\sigma_q}(y)=  
y_1=y_2= \cdots =y_l=0 \} \not= \emptyset$) 
For $1 \leq i \leq n$ set $H_i=\{ 
y\in \RR^n(\sigma_q) \ | \ y_i=0\}$. 
Then we may assume that $\{ 
1 \leq i \leq n \ | \ \supp \varphi_q 
\cap H_i \not= \emptyset \} =\{ 
1,2, \ldots , r\}$ for some 
$r \geq l$. In this 
case, by a real analytic local coordinate 
change $\Phi : (y_1, \ldots , y_n) 
\longmapsto (z_1, \ldots , z_n)$ 
such that $z_i=y_i$ ($1 \leq i \leq r$) 
which sends the hypersurface $\{ 
f_{\sigma_q}=0\}$ to $\{ z_n=0 \}$, the 
function $\rho_{q, \sigma}(\lambda)$ 
is expressed as 
\begin{eqnarray}
 & \rho_{q, \sigma}(\lambda) & =  
 \left( \prod_{i=1}^l 
\frac{1+(-1)^{\nu(\sigma_q)_i}}{ 
l(a^i(\sigma_q))  \nu(\sigma_q)_i !} \right) 
\int_{ \{ z_1= \cdots z_l=0\} } 
\left( 
\prod_{i=l+1}^r g_i(\lambda , z_i) 
\right)  g_n(\lambda , z_n) 
\nonumber\\ 
  &  \frac{\partial^{\nu(\sigma_q)_1 + \cdots + 
 \nu(\sigma_q)_l}}{ \partial z_1^{\nu(\sigma_q)_1} 
\cdots \partial z_l^{\nu(\sigma_q)_l}} & 
\left\{  F(\lambda , z) 
 (\varphi \circ \pi (\sigma_q) \circ \Phi^{-1}) 
(\varphi_q \circ \Phi^{-1}) 
| \frac{\partial (y_1, \ldots , y_n)}{ 
\partial (z_1, \ldots , z_n)} |
\right\}_{z_1= \cdots =z_l=0}
\nonumber\\ 
 & &  dz_{l+1} \cdots dz_n, 
\end{eqnarray}
where 
\begin{equation}
F(\lambda , z) = 
\left( \prod_{i=r+1}^n 
 |y_i|^{l(a^i(\sigma_q))\lambda + 
 |a^i(\sigma_q)|-1} \right) \circ \Phi^{-1} 
\end{equation}
is a real analytic function on a 
neighborhood of $\Phi (\supp \varphi_q)$ 
and $g_{l+1}(\lambda , \cdot)$, $\ldots$, 
$g_r(\lambda , \cdot )$ and $g_n(\lambda , \cdot )$ 
are $1$-dimensional 
integrable functions with holomorphic parameter 
$\lambda$ at $\lambda
 =-\lambda_j \in P$. 
\medskip 
\par 
\noindent Now by our assumption, for 
any $\sigma \in \Sigma_{j,q}^{(l)}$ 
($l \geq k$) there exists $1 \leq i \leq l$ 
such that $\nu(\sigma_q)_i$ is odd. Therefore we 
obtain $a_{j,k}(\varphi)=
0$ in this case. In the same way, we can prove 
also that $a_{j,k+1}(\varphi)= 
\cdots a_{j,k_j}(\varphi)=0$. 
\medskip 
\par 
\noindent (II) Next assume that $\lambda_j$ 
is an integer and set $m:=\lambda_j$. 
Then by (the proof of) 
Proposition \ref{KP}, $a_{j,k}(\varphi)$ can 
be written as 
\begin{equation}
\sum_{q=1}^N \left\{ 
\sum_{l \geq k} 
\sum_{\sigma \in \Sigma^{(l)}_{j,q}} 
J_q(\sigma) + 
\sum_{l \geq k-1} 
\sum_{\sigma \in \Sigma^{(l)}_{j,q}} 
\tilde{J}_q(\sigma) \right\} , 
\end{equation} 
where for $l$ such that $k-1 \leq l \leq n$ 
and $\sigma \in 
\Sigma^{(l)}_{j,q}$ the number 
$\tilde{J}_q(\sigma)$ 
is expressed as follows. 
\begin{equation}
\tilde{J}_q(\sigma)=\frac{1}{(l+1-k)!} \times 
\frac{d^{l+1-k}}{d \lambda^{l+1-k}}
 \tau_{q, \sigma}(\lambda) |_{\lambda =-\lambda_j}. 
\end{equation} 
Let us explain the 
function $\tau_{q, \sigma}(\lambda)$ 
which is holomorphic at $\lambda =-\lambda_j$. 
For simplicity, we assume that $\{ a^1(\sigma_q), 
a^2(\sigma_q), \ldots , a^l(\sigma_q)\}$ 
is the $1$-skelton of 
$\sigma \prec \sigma_q$. If $\supp \varphi_q 
\cap \{ y \in \RR^n(\sigma_q) \ | \ 
f_{\sigma_q}(y)= 
y_1=y_2= \cdots =y_l=0 \} = \emptyset$ 
we set $\tau_{q, \sigma}(\lambda) \equiv 0$. 
If $\supp \varphi_q 
\cap \{ y \in \RR^n(\sigma_q) \ | \ 
f_{\sigma_q}(y)= 
y_1=y_2= \cdots =y_l=0 \} \not= \emptyset$, 
assuming as in (b) that $\{ 
1 \leq i \leq n \ | \ \supp \varphi_q 
\cap H_i \not= \emptyset \} =\{ 
1,2, \ldots , r\}$  for some $r \geq l$ 
and using the local 
coordinate 
change $\Phi$ used in (b), the 
function $\tau_{q, \sigma}(\lambda)$ is expressed as 
\begin{eqnarray}
 & \tau_{q, \sigma}(\lambda) & = 
\left(  
\prod_{i=1}^l 
\frac{1+(-1)^{\nu(\sigma_q)_i}}{ 
l(a^i(\sigma_q))  \nu(\sigma_q)_i !}
 \right) 
\frac{ 1+(-1)^{m-1}}{(m-1)!} 
\int_{ \{ z_1= \cdots =z_l=
z_n=0 \} } 
\left( 
\prod_{i=l+1}^r g_i(\lambda , z_i) 
\right) 
\nonumber\\ 
  & \frac{\partial^{\nu(\sigma_q)_1 + \cdots + 
 \nu(\sigma_q)_l +m-1}}{ 
\partial z_1^{\nu(\sigma_q)_1} 
\cdots \partial 
z_l^{\nu(\sigma_q)_l} \partial 
z_n^{m-1}} & 
 \left\{  F(\lambda , z) 
 (\varphi \circ \pi (\sigma_q) \circ \Phi^{-1}) 
(\varphi_q \circ \Phi^{-1}) 
| \frac{\partial (y_1, \ldots , y_n)}{ 
\partial (z_1, \ldots , z_n)} | 
 \right\}_{z_1= \cdots =z_l=z_n=0} 
\nonumber\\ 
 & & dz_{l+1} \cdots dz_{n-1}, 
\end{eqnarray}
where $F(\lambda , z)$ and 
$g_{l+1}(\lambda , \cdot), \ldots , 
g_r(\lambda , \cdot )$ 
are as in (b). Then, as in 
(I), by our assumption 
we obatin $J_q(\sigma)=0$ for 
any $\sigma \in \Sigma_{j,q}^{(l)}$ ($l \geq k$). 
Moreover, since by our assumption 
in (i) the integer $m=\lambda_j$ 
must be even, 
we obtain $\tilde{J}_q 
(\sigma)=0$ for 
any $\sigma \in \Sigma_{j,q}^{(l)}$ 
($l \geq k-1$). Therefore we get 
$a_{j,k}(\varphi)=0$ in this case, 
too. In the same way, we can prove 
also that $a_{j,k+1}(\varphi)= 
\cdots a_{j,k_j}(\varphi)=0$. 
This completes the proof of (i). 
The remaining assertion (ii) can be 
shown similarly. 
\end{proof}

By this theorem we see that many candidate poles of 
$Z_f(\varphi)$ are fake, i.e. not actual poles. 
We can also find a nice condition on the test 
function $\varphi \in C_0^{\infty}(\RR^n)$ 
under which we have the vanishing 
$a_{j,k}(\varphi)= \cdots = 
a_{j,k_j}(\varphi)=0$. For this purpose, 
we introduce the following subset $\Delta_{j,k}$ 
of $\RR^n_+$. 
\begin{definition}\label{DD} 
Let $1 \leq k \leq k_j$. 
\par \noindent (i) 
For $\sigma \in \Sigma^{(k)}_j$ we define a 
compact convex subset  $\Delta_{j,k}^{\sigma}$ 
of $\RR^n_+$ by 
\begin{equation}
\Delta_{j,k}^{\sigma}=\{ \alpha \in \RR_+^n \ | \ 
\langle a^i(\sigma), \alpha \rangle 
\leq \nu(\sigma)_i \quad \text{for any} 
\quad 1 \leq i \leq k \}. 
\end{equation}
\par \noindent (ii) 
We define a compact subset $\Delta_{j,k}$ 
of $\RR^n_+$ by 
\begin{equation}
\Delta_{j,k}= \bigcup_{\sigma \in 
\Sigma^{(k)}_j } \Delta_{j,k}^{\sigma}. 
\end{equation}
\end{definition} 
Note that $\Delta_{j,k}$ is not necessarily a  
convex subset of $\RR^n_+$. It follows also 
from the definition that we have 
$\Delta_{j,k} \supset \Delta_{j,k^{\prime}}$ 
for $1 \leq k \leq k^{\prime} \leq k_j$. 
In order to state our 
another vanishing theorem, let 
\begin{equation}
\varphi (x)=\sum_{\alpha \in \ZZ_+^n} 
c_{\alpha} x^{\alpha} \qquad 
(c_{\alpha} \in \RR) 
\end{equation}
be the Taylor expansion of the test 
function $\varphi$ at $0 \in U \subset \RR^n$. 
\begin{theorem}\label{VT} 
\par \noindent (i) 
Let $1 \leq k \leq k_j$. 
Assume that $\lambda_j$ is not an odd integer and 
$\{ \alpha \in \ZZ_+^n \ | \ 
c_{\alpha} \not= 0\} \cap 
 \Delta_{j,k} = \emptyset$. 
Then we have $a_{j,k}(\varphi)= \cdots = 
a_{j,k_j}(\varphi)=0$. 
\par \noindent (ii) 
Let $2 \leq k \leq k_j$. 
Assume that $\lambda_j$ is an odd integer and 
$\{ \alpha \in \ZZ_+^n \ | \ 
c_{\alpha} \not= 0\} \cap 
 \Delta_{j,k-1} = \emptyset$. 
Then we have $a_{j,k}(\varphi)= \cdots = 
a_{j,k_j}(\varphi)=0$. 
\end{theorem}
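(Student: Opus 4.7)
My plan is to adapt the proof of Theorem \ref{MT}. Using the resolution $\pi : X_\Sigma \to \RR^n$, a partition of unity $\{\varphi_q\}_{q=1}^N$ subordinate to the affine charts $\RR^n(\sigma_q)$ with $\sigma_q \in \Sigma^{(n)}$, and the method of Proposition \ref{KP}, I write $a_{j,k}(\varphi)$ as a finite sum of terms $J_q(\sigma)$ indexed by $\sigma \in \Sigma_{j,q}^{(l)} := \{\sigma \in \Sigma_j^{(l)} \mid \sigma \prec \sigma_q\}$ with $l \geq k$ (in case (ii), when $\lambda_j \in \ZZ$ is odd, there are additional terms $\tilde J_q(\sigma)$ with $l \geq k-1$). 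Each $J_q(\sigma)$ is built, up to the local coordinate change $\Phi$ of case (b) in the proof of Theorem \ref{MT}, from an integral over $\{y_1 = \cdots = y_l = 0\}$ whose integrand involves the operator $\partial^{\nu_1 + \cdots + \nu_l}/(\partial y_1^{\nu_1} \cdots \partial y_l^{\nu_l})$ applied to $|f_{\sigma_q}|^\lambda (\varphi \circ \pi(\sigma_q)) \varphi_q$, where $\nu_i = \nu(\sigma_q)_i$. The goal is to show that these integrals vanish under the hypothesis on $\varphi$.

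The key observation is that for $\sigma \in \Sigma_j^{(l)}$ the primitive vectors $a^1(\sigma_q), \ldots, a^l(\sigma_q)$ all lie in $\RR^n_{>0}$, hence $\pi(\sigma_q)$ sends the whole subspace $\{y_1 = \cdots = y_l = 0\}$ to $0 \in \RR^n$. Substituting the Taylor expansion $\varphi(x) = \sum_\alpha c_\alpha x^\alpha$ yields the asymptotic expansion
\[
(\varphi \circ \pi(\sigma_q))(y) \sim \sum_{\alpha \in \ZZ_+^n} c_\alpha \prod_{i=1}^n y_i^{\langle a^i(\sigma_q), \alpha\rangle}
\]
at every point of $\{y_1 = \cdots = y_l = 0\}$. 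Expanding the derivative by the Leibniz rule and isolating a term in which $(\beta_1, \ldots, \beta_l)$ derivatives, $\beta_i \leq \nu_i$, fall on $\varphi \circ \pi(\sigma_q)$, the restriction of that partial derivative to the subspace reduces to a sum over $\alpha$ with $\langle a^i(\sigma_q), \alpha\rangle = \beta_i$ for $i = 1, \ldots, l$. In particular only $\alpha$ with $\langle a^i(\sigma_q), \alpha\rangle \leq \nu(\sigma_q)_i$ for $i = 1, \ldots, l$ can contribute, i.e., $\alpha \in \Delta_{j,l}^\sigma$. Since $\sigma$ possesses $k$-dimensional faces $\sigma' \in \Sigma_j^{(k)}$ (generated by any $k$ of its edges, all of which satisfy the defining conditions of $\Sigma_j^{(k)}$), we have $\Delta_{j,l}^\sigma \subset \Delta_{j,k}^{\sigma'} \subset \Delta_{j,k}$.

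Under the hypothesis in (i), every such contributing $\alpha$ has $c_\alpha = 0$, so each $J_q(\sigma)$ with $l \geq k$ vanishes, yielding $a_{j,k}(\varphi) = 0$; applying the argument with $k$ replaced by $k+1, \ldots, k_j$ gives $a_{j,k+1}(\varphi) = \cdots = a_{j,k_j}(\varphi) = 0$. For part (ii) the $J_q$-contributions are handled identically, and the extra terms $\tilde J_q(\sigma)$ for $\sigma \in \Sigma_{j,q}^{(l)}$ with $l \geq k-1$ carry an additional derivative $\partial_{z_n}^{m-1}$ (with $m = \lambda_j$) in the $\Phi$-coordinates, coming from the $|z_n|^\lambda$ factor that replaces $|f_{\sigma_q}|^\lambda$. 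The same Leibniz-plus-Taylor argument in the $z$-variables constrains the relevant $\alpha$'s only by the $l$ inequalities $\langle a^i(\sigma_q), \alpha\rangle \leq \nu(\sigma_q)_i$, $i = 1, \ldots, l$, hence $\alpha \in \Delta_{j,l}^\sigma \subset \Delta_{j,k-1}$, and the hypothesis $\{c_\alpha \neq 0\} \cap \Delta_{j,k-1} = \emptyset$ kills these as well.

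The main technical obstacle is to justify that the formal Taylor expansion above yields the correct values of the transverse derivatives of $\varphi \circ \pi(\sigma_q)$ at \emph{arbitrary} points of the subspace $\{y_1 = \cdots = y_l = 0\}$, not only at the origin of $\RR^n(\sigma_q)$, when $\varphi$ is merely $C^\infty$. I handle this by truncating $\varphi = T_N \varphi + R_N$ at large order $N$, where $R_N(x) = O(|x|^{N+1})$; since $\pi(\sigma_q)$ vanishes identically on the subspace and $a^i(\sigma_q)_m > 0$ for all $i \leq l$ and all $m$, the composition $R_N \circ \pi(\sigma_q)$ vanishes along the subspace to order $\geq (N+1) \min_{i \leq l, m} a^i(\sigma_q)_m$ in the $y_1, \ldots, y_l$ directions, so for $N$ large enough its $(\nu_1 + \cdots + \nu_l)$-th derivative is zero there and the analysis of $T_N \varphi$, which is a polynomial, produces the correct contribution.
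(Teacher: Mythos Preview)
Your argument is correct and follows essentially the same route as the paper's proof: both reduce to the expression of $a_{j,k}(\varphi)$ from the proof of Theorem~\ref{MT}, pull back monomials via $\pi(\sigma_q)$ to monomials $\prod_i y_i^{\langle a^i(\sigma_q),\alpha\rangle}$ on a neighborhood of $\{y_1=\cdots=y_l=0\}$, and observe that only $\alpha\in\Delta^{\sigma}_{j,l}\subset\Delta_{j,k}$ survive the transverse derivative. The paper phrases this by introducing auxiliary test functions $\psi_\alpha\equiv x^\alpha$ near $0$ and invoking the inclusion $\Delta_{j,l}\subset\Delta_{j,k}$ directly, whereas you expand $\varphi$ by Taylor's formula and handle the remainder $R_N$ explicitly; your extra care with $R_N$ is a welcome clarification of a step the paper leaves implicit.
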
 

\begin{proof}
We use the notations in the proof of 
Theorem \ref{MT}. 
\par 
\noindent (i) Since the proof for the case 
where $\lambda_j$ is an integer is 
similar, we prove the assertion 
only in the case where $\lambda_j$ is 
not an integer. In this case, we have 
\begin{equation}
a_{j,k}(\varphi)= 
\sum_{q=1}^N \sum_{l \geq k} 
\sum_{\sigma \in \Sigma^{(l)}_{j,q}} 
\frac{1}{(l-k)!} 
\frac{d^{l-k}}{d \lambda^{l-k}}
 \rho_{q, \sigma}(\lambda) |_{\lambda =-\lambda_j}, 
\end{equation} 
where the function $\rho_{q, \sigma}(\lambda)$ is 
holomorphic at $\lambda =-\lambda_j$ 
(for its expression, 
see the proof of Theorem \ref{MT}). 
For $\alpha \in \ZZ_+^n$ let $\psi_{\alpha} 
\in C_0^{\infty}(U)$ be a test function 
on $U$ such that $\psi_{\alpha} \equiv 
x^{\alpha}$ in a neighborhood of $0 \in 
U \subset \RR^n$. For $1 \leq q \leq N$ 
and $\sigma \in \Sigma_{j,q}^{(l)}$ 
($l \geq k$), assume for simplicity 
that $\{ a^1(\sigma_q), 
a^2(\sigma_q), \ldots , a^l(\sigma_q)\}$ 
is the $1$-skelton of 
$\sigma \prec \sigma_q$. Then in an 
open neighborhood of 
$\{ y \in \RR^n(\sigma_q) \ | \ 
y_1= \cdots =y_l=0\} \subset \RR^n(\sigma_q) 
\subset X_{\Sigma}$ we have 
\begin{equation} 
(\psi_{\alpha} \circ \pi (\sigma_q))(y) \equiv 
y_1^{\langle a^1(\sigma_q), \alpha \rangle} 
\cdots 
y_n^{\langle a^n(\sigma_q), \alpha \rangle}.  
\end{equation} 
Moreover, by the definition of $\Delta_{j,k}$, 
if $\alpha \notin \Delta_{j,k}$ then 
we have $\alpha \notin \Delta_{j,l}$ and 
there exists $1 \leq i \leq l$ such that 
$\langle a^i(\sigma_q), \alpha \rangle > 
\nu(\sigma_q)_i$. This implies 
the vanishing of the function 
\begin{equation} 
\frac{\partial^{\nu(\sigma_q)_1 + \cdots + 
 \nu(\sigma_q)_l}}{\partial y_1^{\nu(\sigma_q)_1} 
\cdots \partial y_l^{\nu(\sigma_q)_l}} 
\left\{ 
 |f_{\sigma_q}|^{\lambda} (\psi_{\alpha} \circ 
\pi (\sigma_q)) \varphi_q
 \right\}_{y_1= \cdots =y_l=0} \equiv 0. 
\end{equation}
By applying this vanishing result to the 
expression of $\rho_{q, \sigma}(\lambda)$ 
(in the proof of Theorem \ref{MT}), 
we see that if $\{ \alpha \in \ZZ_+^n \ | \ 
c_{\alpha} \not= 0\} \cap \Delta_{j,k} 
= \emptyset$ the vanishing $a_{j,k}(\varphi)=0$ 
holds. In the same way, we can prove 
also that $a_{j,k+1}(\varphi)= 
\cdots a_{j,k_j}(\varphi)=0$. 
This completes the proof of (i). 
The assertion (ii) can be shown similarly. 
\end{proof} 

Now let us consider the following two 
local zeta functions. 
\begin{equation}
Z_f^{\pm}(\varphi)(\lambda) =\int_{\RR^n} 
 f_{\pm}^{\lambda} (x) \varphi (x)dx. 
\end{equation}
Note that we have $Z_f(\varphi)
=Z_f^{+}(\varphi)+ Z_f^{-}(\varphi)$. 
Then the poles of these lcoal zeta functions 
$Z_f^{\pm}(\varphi)$ are also contained in $P$ 
and their Laurent expansions 
at a candidate pole $\lambda=
-\lambda_j \in P$ have 
the following form: 
\begin{equation}
\frac{a_{j,k_j}^{\pm}
(\varphi) }{(\lambda +\lambda_j)^{k_j}}+ 
\cdots \cdots + 
\frac{a_{j,2}^{\pm}
(\varphi)}{(\lambda +\lambda_j)^{2}}+ 
\frac{a_{j,1}^{\pm}
(\varphi)}{(\lambda +\lambda_j)}+ 
\cdots \cdots  \qquad (a_{j,k}^{\pm}
(\varphi) \in \RR) 
\end{equation}
(see for example \cite{A-G-Z-V}, \cite{Kan} etc.). 
By the proof of 
Theorem \ref{VT} we obtain a vanishing 
theorem also for the coefficients 
$a_{j,k}^{\pm}(\varphi)$ of 
the poles of $Z_f^{\pm}(\varphi)$. 
\begin{theorem}\label{VT2}
\par \noindent (i) 
Let $1 \leq k \leq k_j$. 
Assume that $\lambda_j$ is not an integer and 
$\{ \alpha \in \ZZ_+^n \ | \ 
c_{\alpha} \not= 0\} \cap 
 \Delta_{j,k} = \emptyset$. 
Then we have $a_{j,k}^{\pm}(\varphi)= \cdots = 
a_{j,k_j}^{\pm}(\varphi)=0$. 
\par \noindent (ii) 
Let $2 \leq k \leq k_j$. 
Assume that $\lambda_j$ is an integer and 
$\{ \alpha \in \ZZ_+^n \ | \ 
c_{\alpha} \not= 0\} \cap 
 \Delta_{j,k-1} = \emptyset$. 
Then we have $a_{j,k}^{\pm}(\varphi)= \cdots = 
a_{j,k_j}^{\pm}(\varphi)=0$. 
\end{theorem}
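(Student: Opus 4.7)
The plan is to mimic the proof of Theorem \ref{VT}, replacing the two-sided distribution machinery of Section \ref{sec:2} (the propositions for $H(\varphi)$) by the one-sided machinery (Proposition \ref{KP} for $G(\varphi)$). I start from the same resolution $\pi : X_{\Sigma} \longrightarrow \RR^n$, the same $C^{\infty}$-partition of unity $\{ \varphi_q \}_{q=1}^N$ with $\supp \varphi_q \subset \subset \RR^n(\sigma_q)$ for some $\sigma_q \in \Sigma^{(n)}$, and the same test function $\psi_{\alpha}$ with $\psi_{\alpha} \equiv x^{\alpha}$ near $0$. On each chart $\RR^n(\sigma_q) \simeq \RR_y^n$ the identity
\begin{equation}
(f \circ \pi (\sigma_q))(y)=f_{\sigma_q}(y) \times \prod_{i=1}^n y_i^{l(a^i(\sigma_q))}
\end{equation}
with $f_{\sigma_q}(0) \neq 0$ holds, so after shrinking $\supp \varphi_q$ we may assume that $f_{\sigma_q}$ has a constant sign $\varepsilon_q \in \{+,-\}$ there. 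Then on each of the $2^n$ orthants of $\RR^n(\sigma_q)$ the sign of $(f \circ \pi (\sigma_q))(y)$ is determined by $\varepsilon_q$ and by the parities of the $l(a^i(\sigma_q))$; hence $f_{\pm}^{\lambda} \circ \pi (\sigma_q)$ coincides with $|f|^{\lambda} \circ \pi (\sigma_q)$ on those orthants where the sign is $\pm$ and vanishes on the others.

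Splitting the integrals on each chart orthant-by-orthant, I can apply Proposition \ref{KP} (the one-sided version) in place of its $|x|$-counterpart. The resulting Laurent coefficient $a_{j,k}^{\pm}(\varphi)$ has the same structural form as in the proof of Theorem \ref{MT}: for $\lambda_j \notin \ZZ$,
\begin{equation}
a_{j,k}^{\pm}(\varphi)=
\sum_{q=1}^N \sum_{l \geq k}
\sum_{\sigma \in \Sigma^{(l)}_{j,q}}
\frac{1}{(l-k)!}
\frac{d^{l-k}}{d \lambda^{l-k}}
 \rho_{q, \sigma}^{\pm}(\lambda) \big|_{\lambda =-\lambda_j},
\end{equation}
where the $\rho_{q,\sigma}^{\pm}(\lambda)$ are holomorphic at $\lambda =-\lambda_j$ and have exactly the same shape as the $\rho_{q,\sigma}(\lambda)$ in the proof of Theorem \ref{MT}, the only difference being that each factor $\frac{1+(-1)^{\nu(\sigma_q)_i}}{l(a^i(\sigma_q))\, \nu(\sigma_q)_i!}$ is replaced by a sign-dependent summand $\frac{(\pm 1)^{\cdots}}{l(a^i(\sigma_q))\, \nu(\sigma_q)_i!}$ coming from the one-sided residue \eqref{R1}. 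For integer $\lambda_j = m$, there is in addition a contribution from poles created by $|f_{\sigma_q}|^{\lambda}$ at $\lambda = -m$, yielding extra terms indexed by $\sigma \in \Sigma^{(l)}_{j,q}$ with $l \geq k-1$ and of the form $\tau_{q, \sigma}^{\pm}(\lambda)$ entirely analogous to the $\tau_{q, \sigma}(\lambda)$ of the proof of Theorem \ref{MT}, except that the symmetry factor $1+(-1)^{m-1}$ is now replaced by $1$. This is precisely why Theorem \ref{VT}(ii) needed $\lambda_j$ odd (so that $1+(-1)^{m-1} \neq 0$ forced even $m$ to vanish for free), whereas here every integer $\lambda_j$ contributes and the hypothesis on $\Delta_{j,k-1}$ is required for all of them.

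With these expressions in hand, the vanishing argument is identical to the one in the proof of Theorem \ref{VT}. Expanding $\varphi$ as $\sum_{\alpha} c_{\alpha} \psi_{\alpha}$ and using
\begin{equation}
(\psi_{\alpha} \circ \pi (\sigma_q))(y) \equiv
y_1^{\langle a^1(\sigma_q), \alpha \rangle}
\cdots
y_n^{\langle a^n(\sigma_q), \alpha \rangle}
\end{equation}
near $\{ y_1= \cdots = y_l=0\}$, I note that if $\alpha \notin \Delta_{j,k}$ (resp. $\alpha \notin \Delta_{j,k-1}$ in the integer case), then for every $\sigma \in \Sigma^{(l)}_{j,q}$ with $l \geq k$ (resp. $l \geq k-1$) there exists $1 \leq i \leq l$ with $\langle a^i(\sigma_q), \alpha \rangle > \nu(\sigma_q)_i$. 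This makes the corresponding $y_i^{\nu(\sigma_q)_i}$-derivative vanish at $y_i = 0$, so the whole summand is zero. The derivative-vanishing mechanism is purely combinatorial and is insensitive to the replacement of the $1+(-1)^{\nu(\sigma_q)_i}$ factors by the one-sided ones, which is why the argument carries over to $Z_f^{\pm}(\varphi)$. An identical argument shows the vanishing of $a_{j,k+1}^{\pm}(\varphi), \ldots, a_{j,k_j}^{\pm}(\varphi)$, completing both (i) and (ii).

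The main technical point to be careful about is the orthant decomposition on each chart and the precise bookkeeping of signs when pulling back $f_{\pm}^{\lambda}$; however, since the $\Delta_{j,k}$-vanishing hinges only on derivatives of $\psi_{\alpha} \circ \pi (\sigma_q)$ (whose powers of $y_i$ are nonnegative), these sign issues never interfere with the conclusion.
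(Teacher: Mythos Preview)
Your proposal is correct and follows essentially the same approach as the paper, which in fact gives no separate proof of Theorem \ref{VT2} beyond the one-line remark ``By the proof of Theorem \ref{VT} we obtain a vanishing theorem also for the coefficients $a_{j,k}^{\pm}(\varphi)$.'' You have correctly identified the two modifications needed: replacing the two-sided residues (with their parity factors $1+(-1)^{\nu_i}$) by the one-sided residues of Proposition \ref{KP}, and noting that the symmetry factor $1+(-1)^{m-1}$ in the $\tau$-terms no longer kills the even-integer contributions, which is exactly why part (ii) here applies to all integers rather than only odd ones. One small caveat: your sentence ``after shrinking $\supp \varphi_q$ we may assume that $f_{\sigma_q}$ has a constant sign'' is not literally achievable on charts where $\supp \varphi_q$ meets $\{f_{\sigma_q}=0\}$ (case (b) of the proof of Theorem \ref{MT}); there one must use the coordinate change $\Phi$ sending $\{f_{\sigma_q}=0\}$ to $\{z_n=0\}$ and work with $z_{n\pm}^{\lambda}$ instead, but as you observe at the end, the derivative-vanishing mechanism in the variables $z_1,\ldots,z_l=y_1,\ldots,y_l$ is unaffected by this.
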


\section{Explicit formulas for the poles of 
local zeta functions}\label{sec:4}

In this section we give some explicit 
formulas for the coefficients 
$a_{j,n}(\varphi)$, $a_{j,n}^{\pm}(\varphi)$ 
of the deepest poles 
$\lambda =- \lambda_j \in P$ of the 
local zeta functions $Z_f(\varphi)$, 
$Z_f^{\pm}(\varphi)$ introduced in 
Section \ref{sec:3}. We inherit the situation 
and the notations in Section \ref{sec:3}.
Let $-\lambda_j \in P$ be a candidate pole of 
$Z_f(\varphi)$. 
\begin{definition}
For $\sigma \in \Sigma^{(n)}_j$ and $\alpha 
\in \ZZ_+^n$ we define an 
integer $\mu(\sigma, \alpha)_i$ by 
\begin{equation}
\mu(\sigma, \alpha)_i=\nu(\sigma)_i 
- \langle a^i(\sigma), \alpha \rangle 
\in \ZZ. 
\end{equation}
\end{definition} 

\begin{theorem}\label{THA}
Assume that $\lambda_j$ is not 
an odd integer 
and $k_j=n$. Then the coefficient 
$a_{j,n}(\varphi)$ 
of the deepest possible 
pole $\lambda =-\lambda_j \in P$ 
of $Z_f(\varphi)$ is 
given by 
\begin{eqnarray}
 & a_{j,n}(\varphi) & 
\nonumber\\ 
 & = & \sum_{\alpha \in \Delta_{j,n}} 
 \left\{ 
\sum_{\sigma \in \Sigma^{(n)}_j} 
\left( \prod_{i=1}^n 
\frac{1+(-1)^{\nu(\sigma)_i}}{ 
l(a^i(\sigma)) \times \mu(\sigma , \alpha)_i!}
\right) 
\frac{\partial^{\mu(\sigma , \alpha)_1 + 
\cdots + \mu(\sigma , \alpha)_n} }{ 
\partial y_1^{\mu(\sigma , \alpha)_1} 
\cdots \partial y_n^{\mu(\sigma , \alpha)_n} }
 |f_{\sigma}|^{-\lambda_j} (0) \right\} 
\nonumber\\ 
 & \times & 
\frac{\partial_x^{\alpha} \varphi (0)}{\alpha !}, 
\end{eqnarray}
where for $\mu <0$ we set $\frac{\partial^{\mu}
}{\partial y_i^{\mu} } 
( \cdot )=0$. 
\end{theorem}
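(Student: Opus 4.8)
The plan is to read off the top Laurent coefficient directly from the local description already set up in the proof of Theorem \ref{MT}. Recall from that proof that, having chosen $C^\infty$ functions $\varphi_q$ ($1\le q\le N$) on $X_\Sigma$ with $\supp\varphi_q\subset\subset\RR^n(\sigma_q)$ for suitable $\sigma_q\in\Sigma^{(n)}$ and $\sum_q\varphi_q\equiv 1$ on a neighbourhood of $\pi^{-1}(\supp\varphi)$, one has
\[
Z_f(\varphi)(\lambda)=\sum_{q=1}^N\int_{\RR^n(\sigma_q)}\prod_{i=1}^n|y_i|^{l(a^i(\sigma_q))\lambda+|a^i(\sigma_q)|-1}\,|f_{\sigma_q}|^{\lambda}(y)\,(\varphi\circ\pi(\sigma_q))(y)\,\varphi_q(y)\,dy,
\]
and that $a_{j,n}(\varphi)$ is then a sum of the terms $J_q(\sigma)$ over $q$ and over $\sigma\in\Sigma^{(l)}_{j,q}$ with $l\ge n$, together with, when $\lambda_j\in\ZZ$, the terms $\tilde{J}_q(\sigma)$ over $\sigma\in\Sigma^{(l)}_{j,q}$ with $l\ge n-1$. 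Now $l\ge n$ forces $l=n$, hence $\sigma=\sigma_q\in\Sigma^{(n)}_j$; and each $\tilde{J}_q(\sigma)$ carries the factor $1+(-1)^{\lambda_j-1}$, which vanishes because $\lambda_j$ is not an odd integer. Thus $a_{j,n}(\varphi)=\sum_{q:\,\sigma_q\in\Sigma^{(n)}_j}J_q(\sigma_q)$. In particular, if $\Sigma^{(n)}_j=\emptyset$ — which, given $k_j=n$, is precisely the case in which $\lambda_j$ is a positive even integer — both sides of the asserted formula vanish, and it remains to treat the case $\Sigma^{(n)}_j\ne\emptyset$, in which necessarily $\lambda_j\notin\ZZ$.

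Next I would evaluate $J_q(\sigma_q)=\rho_{q,\sigma_q}(-\lambda_j)$ for $\sigma_q\in\Sigma^{(n)}_j$ by specializing the expression for $\rho_{q,\sigma}(\lambda)$ from the proof of Theorem \ref{MT} to $l=n$. There the coordinate subspace $\{y_1=\cdots=y_n=0\}$ is the single torus fixed point $0\in\RR^n(\sigma_q)$, where $f_{\sigma_q}(0)\ne 0$, so one is in case (a), with no remaining integration variables and no $\lambda$-derivative, and $|f_{\sigma_q}|^{\lambda}$ is real analytic in $y$ and holomorphic in $\lambda$ near $(0,-\lambda_j)$. This gives
\[
a_{j,n}(\varphi)=\sum_{q:\,\sigma_q\in\Sigma^{(n)}_j}\Bigl(\prod_{i=1}^n\frac{1+(-1)^{\nu(\sigma_q)_i}}{l(a^i(\sigma_q))\,\nu(\sigma_q)_i!}\Bigr)\,\partial_y^{\nu(\sigma_q)}\bigl[\,|f_{\sigma_q}|^{-\lambda_j}(\varphi\circ\pi(\sigma_q))\,\varphi_q\,\bigr](0).
\]
Since the torus fixed point $0\in\RR^n(\sigma_q)$ lies in no affine chart of $X_\Sigma$ other than $\RR^n(\sigma_q)$, the cutoffs $\varphi_{q'}$ with $\sigma_{q'}\ne\sigma_q$ vanish near it, so for each $\sigma\in\Sigma^{(n)}_j$ one has $\sum_{q':\,\sigma_{q'}=\sigma}\varphi_{q'}\equiv 1$ near $0\in\RR^n(\sigma)$; applying the Leibniz rule and using that every positive-order $y$-derivative of this sum vanishes at $0$ absorbs the cutoff, yielding
\[
a_{j,n}(\varphi)=\sum_{\sigma\in\Sigma^{(n)}_j}\Bigl(\prod_{i=1}^n\frac{1+(-1)^{\nu(\sigma)_i}}{l(a^i(\sigma))\,\nu(\sigma)_i!}\Bigr)\,\partial_y^{\nu(\sigma)}\bigl[\,|f_{\sigma}|^{-\lambda_j}(\varphi\circ\pi(\sigma))\,\bigr](0).
\]

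Finally I would expand $\varphi$ in its Taylor series at $0$. Because $\pi(\sigma)(y)^{\alpha}=\prod_{i=1}^n y_i^{\langle a^i(\sigma),\alpha\rangle}$ and all $a^i(\sigma)$ have positive entries for $\sigma\in\Sigma^{(n)}_j$, one has $(\varphi\circ\pi(\sigma))(y)=\sum_{\alpha}\frac{\partial_x^\alpha\varphi(0)}{\alpha!}\prod_i y_i^{\langle a^i(\sigma),\alpha\rangle}$ near $y=0$, and since $\partial_y^{\nu(\sigma)}$ is of finite order only finitely many $\alpha$ contribute (equivalently, by linearity one may replace $\varphi$ by the test functions $\psi_\alpha$ agreeing with $x^\alpha$ near $0$, as in the proof of Theorem \ref{VT}). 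Applying $\partial_y^{\nu(\sigma)}$ to $|f_\sigma|^{-\lambda_j}\prod_i y_i^{\langle a^i(\sigma),\alpha\rangle}$ and evaluating at $0$, the Leibniz rule together with $\partial_{y_i}^{\beta_i}y_i^{m}\big|_{y_i=0}=m!\,\delta_{\beta_i,m}$ kills every term except the one with $\beta_i=\langle a^i(\sigma),\alpha\rangle$ for all $i$; this forces $\langle a^i(\sigma),\alpha\rangle\le\nu(\sigma)_i$ for every $i$ — i.e. $\alpha\in\Delta_{j,n}^{\sigma}$, the $\sigma$-contribution being $0$ otherwise, in agreement with the convention $\partial^\mu/\partial y_i^\mu=0$ for $\mu<0$ — and produces the factor $\prod_i\binom{\nu(\sigma)_i}{\langle a^i(\sigma),\alpha\rangle}\langle a^i(\sigma),\alpha\rangle!$ multiplying $\partial_y^{\mu(\sigma,\alpha)}|f_\sigma|^{-\lambda_j}(0)$, where $\mu(\sigma,\alpha)_i=\nu(\sigma)_i-\langle a^i(\sigma),\alpha\rangle$. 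Using the identity $\frac{1}{\nu_i!}\binom{\nu_i}{m_i}m_i!=\frac{1}{(\nu_i-m_i)!}$ collapses the constants to the stated shape, and summing over $\sigma\in\Sigma^{(n)}_j$ and over $\alpha$ (only those in $\bigcup_{\sigma}\Delta_{j,n}^{\sigma}=\Delta_{j,n}$ surviving) yields the claimed formula.

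The step I expect to be the main obstacle is the bookkeeping around the partition of unity, namely justifying that summing the per-chart contributions over all $q$ sharing a fixed $n$-dimensional cone legitimately replaces the local cutoff $\varphi_q$ by $1$. This relies on three points: the distinguished point $0\in\RR^n(\sigma_q)$ lies in no affine chart of the smooth real toric variety $X_\Sigma$ other than $\RR^n(\sigma_q)$; the set $\pi^{-1}(\supp\varphi)$ is compact because $\pi$ is proper; and the $\varphi_q$ may be chosen with $\sum_q\varphi_q\equiv 1$ on a full neighbourhood of $\pi^{-1}(\supp\varphi)$, so that all positive-order $y$-derivatives of $\sum_q\varphi_q$ vanish at each such point. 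Everything else — the reduction to $\Sigma^{(n)}_j$, the vanishing when $\lambda_j$ is an even integer, and the Leibniz/binomial simplification — is routine once the expression from the proof of Theorem \ref{MT} is at hand.
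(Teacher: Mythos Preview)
Your proof is correct and follows essentially the same route as the paper: reduce to the proof of Theorem \ref{MT} to obtain $a_{j,n}(\varphi)=\sum_{\sigma\in\Sigma^{(n)}_j}\bigl(\prod_i\frac{1+(-1)^{\nu(\sigma)_i}}{l(a^i(\sigma))\,\nu(\sigma)_i!}\bigr)\,\partial_y^{\nu(\sigma)}\bigl[|f_\sigma|^{-\lambda_j}(\varphi\circ\pi(\sigma))\bigr](0)$, then Taylor-expand $\varphi$ and apply the Leibniz rule. The paper is terser, simply invoking ``the proof of Theorem \ref{MT}'' to get the cutoff-free formula and citing Theorem \ref{VT} for the restriction to $\alpha\in\Delta_{j,n}$; you instead work out explicitly why the partition of unity drops out (via the torus fixed point lying only in its own chart) and derive the restriction to $\Delta_{j,n}^{\sigma}$ directly from the Leibniz calculation --- but the substance is the same.
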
 
\begin{proof}
Since $\lambda_j$ is not an odd integer, by 
the proof of Theorem \ref{MT} we have 
\begin{eqnarray}
  a_{j,n}(\varphi) & = & 
\sum_{\sigma \in \Sigma^{(n)}_j} 
\left( \prod_{i=1}^n 
\frac{1+(-1)^{\nu(\sigma)_i}}{ 
l(a^i(\sigma)) \times \nu(\sigma)_i!}
\right) 
\nonumber\\ 
 & & \times 
\frac{\partial^{\nu(\sigma)_1 + 
\cdots + \nu(\sigma)_n } }{ 
\partial y_1^{\nu(\sigma)_1} 
\cdots \partial y_n^{\nu(\sigma)_n} }
\left\{ |f_{\sigma}|^{-\lambda_j} 
(\varphi \circ 
\pi (\sigma)) \right\}_{y=0}. 
\end{eqnarray}
Now let 
\begin{equation} 
\varphi (x)= \sum_{\alpha \in \ZZ_+^n} 
\frac{\partial_x^{\alpha} \varphi (0)}{\alpha !}
x^{\alpha} 
\end{equation} 
be the Taylor expansion of $\varphi$ 
at $0 \in \RR^n$. Since $a_{j,n}(x^{\alpha})=0$ 
for $\alpha \notin \Delta_{j,n}$ by 
Theorem \ref{VT} 
and we have 
\begin{equation} 
(x^{\alpha} \circ \pi (\sigma))(y) \equiv 
y_1^{\langle a^1(\sigma), \alpha \rangle} 
\cdots 
y_n^{\langle a^n(\sigma), \alpha \rangle}
\end{equation} 
in a neighborhood of $0 \in \RR^n(\sigma)$ 
for any $\sigma \in \Sigma_j^{(n)}$, 
we obtain 
\begin{eqnarray}
 & a_{j,n}(\varphi) & = 
  \sum_{\alpha \in \Delta_{j,n}} 
\nonumber\\ 
 & & 
\left\{ 
 \sum_{\sigma \in \Sigma^{(n)}_j} 
\left( \prod_{i=1}^n 
\frac{1+(-1)^{\nu(\sigma)_i}}{ 
l(a^i(\sigma)) \times \nu(\sigma)_i!}
\right) 
\frac{\partial^{\nu(\sigma)_1 + 
\cdots + \nu(\sigma)_n } }{ 
\partial y_1^{\nu(\sigma)_1} 
\cdots \partial y_n^{\nu(\sigma)_n} }
\left( |f_{\sigma}|^{-\lambda_j} 
y_1^{\langle a^1(\sigma), \alpha \rangle} 
\cdots 
y_n^{\langle a^n(\sigma), \alpha \rangle} 
 \right)_{y=0} 
\right\} 
\nonumber\\ 
 & & \times 
\frac{\partial_x^{\alpha} 
\varphi (0)}{\alpha !}. 
\end{eqnarray}
Then the result follows from the Leibniz rule. 
This completes the proof.
\end{proof}
In order to state similar results for 
$a_{j,n}^{\pm}(\varphi)$ we define two integers 
$c_{\pm}(\sigma)$ ($\sigma \in \Sigma^{(n)}_j$) 
as follows. First set $\{ \pm 1 \}^n :=
\{ \e =(\e_1, \e_2 , \ldots , \e_n) \ | \ 
\e_i=\pm 1\}$. For $\sigma \in \Sigma^{(n)}_j$ 
we define subset $Q_{\pm}(\sigma)$ of 
 $\{ \pm 1 \}^n $ by 
\begin{equation}
Q_{\pm}(\sigma)=\{ \e =(\e_1, \e_2 , 
\ldots , \e_n) \ | \ 
\pm f_{\sigma}(0) \times 
\prod_{i=1}^n \e_i^{l(a^i(\sigma))} >0\}. 
\end{equation}
Let us explain the meaning of $Q_{\pm}(\sigma) 
 \subset \{ \pm 1 \}^n $. For each 
$\e =(\e_1, \ldots , \e_n) \in \{ \pm 1 \}^n $ 
we define an open subset $V_{\e}$ of 
$\RR^n(\sigma) \simeq \RR^n_y$ by 
\begin{equation}
V_{\e}=\{ (y_1, y_2, \ldots , y_n) 
\in \RR^n(\sigma) \ | \ 
\e_iy_i >0 \ \text{for any} \ 
1 \leq i \leq n \}. 
\end{equation}
Then there exists a sufficiently small 
open neighborhood $W$ of 
$0 \in \RR^n(\sigma)$ such that 
$\pm (f \circ \pi (\sigma))|_{W \cap V_{\e}}>0$ 
for any $\e \in Q_{\pm}(\sigma)$. 
Namely $Q_{\pm}(\sigma)$ is naturally identified 
with the set $\{ V_{\e} \}_{\e \in Q_{\pm}(\sigma)}$ 
of open quadrants in $\RR^n(\sigma) \simeq \RR^n_y$ 
such that $\pm (f \circ \pi 
(\sigma))|_{W \cap V_{\e}}>0$ in a neighborhood 
of $0 \in \RR^n(\sigma)$. 
\begin{definition}
For $\sigma \in \Sigma^{(n)}_j$ we set 
\begin{equation}
c_{\pm}(\sigma)=\sum_{\e \in Q_{\pm}(\sigma)} 
\left( \prod_{i=1}^n \e_i^{\nu(\sigma)_i} 
\right) \in \ZZ . 
\end{equation}
\end{definition} 
\noindent Note that for any 
$\sigma \in \Sigma^{(n)}_j$ we 
have 
\begin{equation}
c_{+}(\sigma)+ c_{-}(\sigma)= 
\prod_{i=1}^n 
\{ 1+ (-1)^{\nu(\sigma)_i} \} . 
\end{equation}
\begin{theorem}\label{THB} 
Assume that $\lambda_j$ is not an integer 
and $k_j=n$. Then the coefficient 
$a_{j,n}^{\pm}(\varphi)$ 
of the deepest possible 
pole $\lambda =-\lambda_j \in P$ 
of $Z_f^{\pm}(\varphi)$ is 
given by 
\begin{eqnarray}
 & a_{j,n}^{\pm}(\varphi) & 
\nonumber\\ 
 & = & \sum_{\alpha \in \Delta_{j,n}} 
\left\{ 
\sum_{\sigma \in \Sigma^{(n)}_j}
c_{\pm}(\sigma) 
\left( \prod_{i=1}^n 
\frac{1}{ 
l(a^i(\sigma)) \times \mu(\sigma , \alpha)_i!}
\right) 
\frac{\partial^{\mu(\sigma , \alpha)_1 + 
\cdots + \mu(\sigma , \alpha)_n} }{ 
\partial y_1^{\mu(\sigma , \alpha)_1} 
\cdots \partial y_n^{\mu(\sigma , \alpha)_n} }
 |f_{\sigma}|^{-\lambda_j} (0)
 \right\} 
\nonumber\\ 
 & & \times 
\frac{\partial_x^{\alpha} \varphi (0)}{\alpha !}, 
\end{eqnarray}
where for $\mu <0$ we set $\frac{\partial^{\mu}
}{\partial y_i^{\mu} } 
( \cdot )=0$. 
\end{theorem}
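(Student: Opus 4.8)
The plan is to run the proof of Theorem \ref{THA} essentially verbatim, inserting a decomposition of each $n$-dimensional toric chart into its $2^n$ open quadrants so as to separate the locus $\{f\circ\pi>0\}$ from $\{f\circ\pi<0\}$. First I would start from the resolution $\pi:X_\Sigma\to\RR^n$ and the partition of unity $\{\varphi_q\}_{q=1}^N$ subordinate to charts $\RR^n(\sigma_q)$ with $\sigma_q\in\Sigma^{(n)}$, as in the proof of Theorem \ref{MT}. On $\RR^n(\sigma_q)\simeq\RR^n_y$ one has $(f\circ\pi(\sigma_q))(y)=f_{\sigma_q}(y)\prod_{i=1}^n y_i^{l(a^i(\sigma_q))}$ with $f_{\sigma_q}(0)\neq 0$, so $f_{\pm}^{\lambda}\circ\pi(\sigma_q)$ equals $|f_{\sigma_q}|^{\lambda}\prod_i|y_i|^{l(a^i(\sigma_q))\lambda}$ on those open quadrants $V_{\e}=\{\e_iy_i>0\}$ on which $\pm(f\circ\pi(\sigma_q))>0$, and vanishes near $0$ on the rest. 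Since $f_{\sigma_q}(0)\neq 0$, that sign on $V_{\e}$ is $\operatorname{sgn}(f_{\sigma_q}(0))\prod_i\e_i^{l(a^i(\sigma_q))}$, so the surviving quadrants are exactly those with $\e\in Q_{\pm}(\sigma_q)$, and
\begin{equation}
Z_f^{\pm}(\varphi)(\lambda)=\sum_{q=1}^N\sum_{\e\in Q_{\pm}(\sigma_q)}\int_{V_{\e}}\Bigl(\prod_{i=1}^n|y_i|^{l(a^i(\sigma_q))\lambda+|a^i(\sigma_q)|-1}\Bigr)|f_{\sigma_q}|^{\lambda}\,(\varphi\circ\pi(\sigma_q))\,\varphi_q\,dy .
\end{equation}

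Next, on each quadrant I would substitute $y_i=\e_it_i$ with $t_i>0$. Because $f_{\sigma_q}(0)\neq 0$, the factor $t\mapsto|f_{\sigma_q}(\e t)|^{\lambda}$ is real analytic near $0$ and entire in $\lambda$, so (after a cutoff, exactly as in the proof of Theorem \ref{MT}) each summand becomes a $G$-type integral of Section \ref{sec:2} with $l_i=l(a^i(\sigma_q))$, $m_i=|a^i(\sigma_q)|-1$ and $\nu_i=\nu(\sigma_q)_i$; the hypothesis $\lambda_j\notin\ZZ$ enters here to guarantee that the pole at $\lambda=-\lambda_j$ has order governed by $k_j=n$ with no extra $\delta$-type contributions from negative-integer candidate poles (this is also why $Z_f^{\pm}$ requires $\lambda_j\notin\ZZ$ rather than merely $\lambda_j$ not an odd integer as in Theorem \ref{THA}). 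By Proposition \ref{KP} specialized to $\sharp S=n$, the coefficient of $(\lambda+\lambda_j)^{-n}$ contributed by $V_{\e}$ is $\bigl(\prod_i\frac{1}{l(a^i(\sigma_q))\nu(\sigma_q)_i!}\bigr)$ times $\frac{\partial^{\nu(\sigma_q)_1+\cdots+\nu(\sigma_q)_n}}{\partial t_1^{\nu(\sigma_q)_1}\cdots\partial t_n^{\nu(\sigma_q)_n}}\bigl[|f_{\sigma_q}(\e t)|^{-\lambda_j}(\varphi\circ\pi(\sigma_q))(\e t)\varphi_q(\e t)\bigr]_{t=0}$. Since each $\partial_{t_i}$ applied to a function of $\e t$ brings out a factor $\e_i$, this derivative is $\bigl(\prod_i\e_i^{\nu(\sigma_q)_i}\bigr)$ times the analogous derivative in the $y$-variables, and summing over $\e\in Q_{\pm}(\sigma_q)$ turns $\prod_i\e_i^{\nu(\sigma_q)_i}$ into $c_{\pm}(\sigma_q)$. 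Reassembling the charts as in the proof of Theorem \ref{THA} (only cones in $\Sigma^{(n)}_j$ contribute to the order-$n$ part, and the $\varphi_q$ recombine to $\varphi\circ\pi(\sigma)$ near $0\in\RR^n(\sigma)$), I would arrive at
\begin{equation}
a_{j,n}^{\pm}(\varphi)=\sum_{\sigma\in\Sigma^{(n)}_j}c_{\pm}(\sigma)\Bigl(\prod_{i=1}^n\frac{1}{l(a^i(\sigma))\nu(\sigma)_i!}\Bigr)\frac{\partial^{\nu(\sigma)_1+\cdots+\nu(\sigma)_n}}{\partial y_1^{\nu(\sigma)_1}\cdots\partial y_n^{\nu(\sigma)_n}}\bigl\{|f_{\sigma}|^{-\lambda_j}(\varphi\circ\pi(\sigma))\bigr\}_{y=0}.
\end{equation}

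Finally I would expand $\varphi$ in its Taylor series at $0$: by Theorem \ref{VT2}(i) (applicable since $\lambda_j\notin\ZZ$) the monomial $x^{\alpha}$ contributes nothing unless $\alpha\in\Delta_{j,n}$, and for $\alpha\in\Delta_{j,n}$ one has $(x^{\alpha}\circ\pi(\sigma))(y)\equiv\prod_iy_i^{\langle a^i(\sigma),\alpha\rangle}$ near $0$. Applying the Leibniz rule to $\frac{\partial^{\nu(\sigma)_1+\cdots+\nu(\sigma)_n}}{\partial y_1^{\nu(\sigma)_1}\cdots\partial y_n^{\nu(\sigma)_n}}\bigl\{|f_{\sigma}|^{-\lambda_j}\prod_iy_i^{\langle a^i(\sigma),\alpha\rangle}\bigr\}\big|_{y=0}$, the unique surviving term differentiates $|f_{\sigma}|^{-\lambda_j}$ exactly $\mu(\sigma,\alpha)_i=\nu(\sigma)_i-\langle a^i(\sigma),\alpha\rangle$ times in $y_i$ (with the stated convention that a negative order kills the term), and inserting the coefficients $\partial_x^{\alpha}\varphi(0)/\alpha!$ yields the claimed formula. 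The main obstacle — indeed the only point not already contained in Theorem \ref{THA} — is the quadrant bookkeeping: verifying that $\sum_{\e\in Q_{\pm}(\sigma)}\prod_i\e_i^{\nu(\sigma)_i}=c_{\pm}(\sigma)$ and, more delicately, that the substitution $y_i=\e_it_i$ on each quadrant leaves the meromorphic-continuation machinery of Section \ref{sec:2} intact; everything else should be a verbatim repetition of the proof of Theorem \ref{THA}.
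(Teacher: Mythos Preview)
Your proposal is correct and is precisely the argument that the paper leaves implicit: the paper states Theorem~\ref{THB} without proof, immediately after proving Theorem~\ref{THA}, the clear intention being that one repeats the proof of Theorem~\ref{THA} with the quadrant decomposition encoded in the definition of $c_{\pm}(\sigma)$. Your handling of that bookkeeping---substituting $y_i=\e_i t_i$ on each $V_{\e}$ to reduce to the one-sided model integrals of Section~\ref{sec:2}, picking up the factor $\prod_i\e_i^{\nu(\sigma)_i}$ from the chain rule, and summing over $\e\in Q_{\pm}(\sigma)$ to produce $c_{\pm}(\sigma)$---is exactly right, as is your observation that the hypothesis $\lambda_j\notin\ZZ$ (rather than merely $\lambda_j$ not odd, as in Theorem~\ref{THA}) is what kills the extra contribution $\tilde{J}_q(\sigma)$ coming from $\{f_{\sigma_q}=0\}$ in case~(II) of the proof of Theorem~\ref{MT}; for $Z_f^{\pm}$ that contribution no longer carries the factor $\frac{1+(-1)^{m-1}}{(m-1)!}$ and hence does not vanish for even $m$.
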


\section{Asymptotic expansions of oscillating 
integrals}\label{sec:5}

In this section, combining our previous arguments 
with those of \cite{A-G-Z-V} and \cite{V}, we 
obtain some results on the asymptotic 
expansions of oscillating 
integrals. As before, let 
 $f$ be a real-valued real analytic function 
defined on an open neighborhood $U$ of $0 \in \RR^n$ 
and $\varphi \in C_0^{\infty}(U)$ a 
real-valued test function defined 
on $U$. We inherit the notations in Section 
\ref{sec:3} and \ref{sec:4}. Then the oscillating 
integral $I_f(\varphi)(t)$ ($t \in \RR$) 
associated to $f$ and $\varphi$ is defined by 
\begin{equation}
I_f(\varphi)(t) =\int_{\RR^n} 
e^{it f(x)} \varphi (x)dx. 
\end{equation}
Here we set $i=\sqrt{-1}$ for short. By the 
fundamental results of Varchenko \cite{V} 
(see also \cite{A-G-Z-V} and \cite{Kan} 
for the detail), 
as $t \longrightarrow + \infty$ the oscillating 
integral $I_f(\varphi)(t)$ has an 
asymptotic expansion of the form 
\begin{equation}\label{E-A} 
I_f(\varphi)(t) \sim \sum_{j=1}^{\infty} 
\sum_{k=1}^{k_j} c_{j,k}(\varphi) 
t^{-\lambda_j}(\log t)^{k-1}, 
\end{equation}
where $c_{j,k}(\varphi)$ are some complex 
numbers. Despite the important contributions 
by many mathematicians (see for example 
\cite{A-G-Z-V}, \cite{D-N-S}, \cite{G-S} 
and \cite{S} etc.), little 
is known about the coefficients $c_{j,k}(\varphi)$ 
of the asymptotic expansion. First of all, 
we shall give a general vanishing theorem 
for these coefficients $c_{j,k}(\varphi)$. 
Let us fix a candidate pole $-\lambda_j 
\in P$ of the local zeta function 
$Z_f(\varphi)$ and let 
\begin{equation}
\varphi (x)=\sum_{\alpha \in \ZZ_+^n} 
c_{\alpha} x^{\alpha} \qquad 
(c_{\alpha} \in \RR) 
\end{equation}
be the Taylor expansion of the test 
function $\varphi$ at $0 \in U \subset \RR^n$. 

\begin{theorem}
\par \noindent (i) 
Let $1 \leq k \leq k_j$. 
Assume that $\lambda_j$ is not an integer and 
$\{ \alpha \in \ZZ_+^n \ | \ 
c_{\alpha} \not= 0\} \cap 
 \Delta_{j,k} = \emptyset$. 
Then we have $c_{j,k}(\varphi)= \cdots = 
c_{j,k_j}(\varphi)=0$. 
\par \noindent (ii) 
Let $2 \leq k \leq k_j$. 
Assume that $\lambda_j$ is an integer and 
$\{ \alpha \in \ZZ_+^n \ | \ 
c_{\alpha} \not= 0\} \cap 
 \Delta_{j,k-1} = \emptyset$. 
Then we have $c_{j,k}(\varphi)= \cdots = 
c_{j,k_j}(\varphi)=0$. 
\end{theorem} 

\begin{proof}
By using the results of \cite{A-G-Z-V} 
and \cite{V}, the result follows 
immediately from Theorem \ref{VT2}. 
\end{proof}
\noindent 
Next we give an explicit formula for 
 the coefficient $c_{j,n}(\varphi)$ 
of $t^{-\lambda_j} (\log t)^{n-1}$ 
 in the asymptotic expansion \eqref{E-A}. 
For this purpose, we define two 
real numbers $b_{j,n}^{\pm}(\varphi) 
\in \RR$ by 
\begin{eqnarray}
 & b_{j,n}^{\pm}(\varphi) & 
\nonumber\\ 
 & = & \sum_{\alpha \in \Delta_{j,n}} 
\left\{ 
\sum_{\sigma \in \Sigma^{(n)}_j}
c_{\pm}(\sigma) 
\left( \prod_{i=1}^n 
\frac{1}{ 
l(a^i(\sigma)) \mu(\sigma , \alpha)_i!}
\right) 
\frac{\partial^{\mu(\sigma , \alpha)_1 + 
\cdots + \mu(\sigma , \alpha)_n} }{ 
\partial y_1^{\mu(\sigma , \alpha)_1} 
\cdots \partial y_n^{\mu(\sigma , \alpha)_n} }
 |f_{\sigma}|^{-\lambda_j} (0) 
\right\} 
\nonumber\\ 
 & & \times 
\frac{\partial_x^{\alpha}
 \varphi (0)}{\alpha !}, 
\end{eqnarray}
where for $\mu <0$ we set $\frac{\partial^{\mu}
}{\partial y_i^{\mu} } 
( \cdot )=0$. Recall that if $\lambda_j$ is 
not an integer we have $a_{j,n}^{\pm}(\varphi)
=b_{j,n}^{\pm}(\varphi)$.  
\begin{theorem}
 The coefficient $c_{j,n}(\varphi)$ 
of $t^{-\lambda_j} (\log t)^{n-1}$ 
 in the asymptotic expansion 
\eqref{E-A} of $I_f(\varphi)$ 
is given by 
\begin{equation}
c_{j,n}(\varphi)=
\frac{\Gamma (\lambda_j)}{(n-1)!} 
\left(
 e^{\frac{\pi i}{2}\lambda_j}b^+_{j,n}(\varphi) 
+ e^{-\frac{\pi i}{2}\lambda_j}b^-_{j,n}(\varphi) 
\right) . 
\end{equation}
\end{theorem}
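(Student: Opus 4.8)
The strategy is to reduce the computation of $c_{j,n}(\varphi)$, the coefficient of $t^{-\lambda_j}(\log t)^{n-1}$ in the asymptotic expansion \eqref{E-A}, to the already-established formula for the coefficients $a_{j,n}^{\pm}(\varphi)$ of the deepest poles of $Z_f^{\pm}(\varphi)$ in Theorem \ref{THB}. The bridge between the two is the classical relation (going back to the work recalled in \cite{A-G-Z-V} and \cite{V}) expressing the oscillating integral as a ``Mellin-type'' transform of the local zeta functions: writing $e^{itf}=e^{itf_+}e^{-itf_-}$ on the loci $\{f>0\}$ and $\{f<0\}$ and using the classical one-dimensional formula $\int_0^{\infty}e^{\pm its}s^{\lambda}\,ds = e^{\pm\frac{\pi i}{2}(\lambda+1)}\Gamma(\lambda+1)\,t^{-\lambda-1}$, one obtains that each simple pole of $Z_f^{+}(\varphi)$ at $\lambda=-\lambda_j$ of order $k$ contributes to $I_f(\varphi)(t)$ a term proportional to $e^{\frac{\pi i}{2}\lambda_j}\Gamma(\lambda_j)\,t^{-\lambda_j}(\log t)^{k-1}$, and similarly for $Z_f^{-}(\varphi)$ with $e^{-\frac{\pi i}{2}\lambda_j}$ in place of $e^{\frac{\pi i}{2}\lambda_j}$.

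First I would recall precisely, from \cite{A-G-Z-V} and \cite{V}, the formula that links the leading Laurent coefficient $a_{j,k_j}^{\pm}(\varphi)$ of $Z_f^{\pm}(\varphi)$ at $\lambda=-\lambda_j$ to the coefficient of $t^{-\lambda_j}(\log t)^{k_j-1}$ in the asymptotic expansion of $I_f(\varphi)$. Concretely, when $k_j=n$ (the deepest case, which is what the statement assumes implicitly via the appearance of $(\log t)^{n-1}$ and the quantity $b_{j,n}^{\pm}$), the highest-order term of the pole is the one that governs the top power of $\log t$, and the combinatorial factor relating the order-$n$ pole to $(\log t)^{n-1}$ is exactly $\tfrac{1}{(n-1)!}$, coming from differentiating $t^{-\lambda}=t^{-\lambda_j}e^{-(\lambda+\lambda_j)\log t}$ in $\lambda$ and extracting the residue of order $n$. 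Combining the $+$ and $-$ contributions with their respective phase factors $e^{\pm\frac{\pi i}{2}\lambda_j}$ and the common Gamma factor $\Gamma(\lambda_j)$ yields
\[
c_{j,n}(\varphi)=\frac{\Gamma(\lambda_j)}{(n-1)!}\left(e^{\frac{\pi i}{2}\lambda_j}a^+_{j,n}(\varphi)+e^{-\frac{\pi i}{2}\lambda_j}a^-_{j,n}(\varphi)\right).
\]
Then I would invoke Theorem \ref{THB} to replace $a^{\pm}_{j,n}(\varphi)$ by $b^{\pm}_{j,n}(\varphi)$; since Theorem \ref{THB} requires $\lambda_j\notin\ZZ$, this identification is legitimate in that range, and as the paper notes just before the statement, in that case $a^{\pm}_{j,n}(\varphi)=b^{\pm}_{j,n}(\varphi)$ by definition of $b^{\pm}_{j,n}$. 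For the remaining case $\lambda_j\in\ZZ$ one argues directly: the $b^{\pm}_{j,n}(\varphi)$ are defined by the same explicit expression, and a separate check using the integer-$\lambda_j$ version of the pole/asymptotics dictionary (the $\tau$-type terms in the proof of Theorem \ref{MT}) shows the same formula persists, the odd/even parity subtleties being absorbed into the $c_{\pm}(\sigma)$ already.

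The main obstacle I anticipate is bookkeeping the phase and normalization constants in the passage from $Z_f^{\pm}$ to $I_f$ when the pole has order $n>1$: one must be careful that only the \emph{leading} Laurent coefficient $a^{\pm}_{j,n}(\varphi)$ feeds into the top power $(\log t)^{n-1}$, that the lower-order Laurent coefficients contribute only to lower powers of $\log t$, and that the factor $\Gamma(\lambda+1)$ in the one-dimensional Mellin formula, evaluated near $\lambda=-\lambda_j-1$ — equivalently $\Gamma(\lambda_j)$ after the shift — does not itself introduce extra poles or derivatives that would alter the coefficient. A clean way to handle this is to work formally with the meromorphic function $\lambda\mapsto e^{\pm\frac{\pi i}{2}(\lambda+1)}\Gamma(\lambda+1)\,Z_f^{\pm}(\varphi)(-\lambda-1)$ (so that its poles in $\lambda$ correspond to the exponents $t^{\lambda}$ in the expansion), take its principal part at the relevant point, and read off the coefficient of $(\log t)^{n-1}$; the holomorphy of $e^{\pm\frac{\pi i}{2}(\lambda+1)}\Gamma(\lambda+1)$ at that point guarantees that the top $\log$-power coefficient is simply the leading Laurent coefficient of $Z_f^{\pm}(\varphi)$ times the value of the entire factor, giving exactly $\Gamma(\lambda_j)e^{\pm\frac{\pi i}{2}\lambda_j}$ up to the sign convention, together with the $\tfrac{1}{(n-1)!}$ from the $n$-fold pole. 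Assembling these pieces gives the asserted formula.
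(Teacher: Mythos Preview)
Your approach is essentially the paper's: both invoke the classical dictionary from \cite{A-G-Z-V}, \cite{V} (and \cite{Kan}) relating the Laurent coefficients of $Z_f^{\pm}(\varphi)$ at $\lambda=-\lambda_j$ to the coefficients of $t^{-\lambda_j}(\log t)^{k-1}$ in the expansion of $I_f(\varphi)$, and then feed in the explicit values from (the proofs of) Theorems \ref{THA} and \ref{THB}. The one substantive remark the paper adds is that, by those same references, the contributions from the strict transforms $\{f_{\sigma_q}=0\}$ may be discarded when computing $c_{j,n}(\varphi)$; this is precisely what justifies using $b^{\pm}_{j,n}(\varphi)$ uniformly---including for integer $\lambda_j$---and is the concrete content behind your ``argue directly'' step for that case.
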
 
\begin{proof}
We use the notations in the proof 
of Theorem \ref{MT}. 
By the arguments in \cite{A-G-Z-V} 
and \cite{Kan} etc. 
we do not have to consider the 
contributions from $\{ f_{\sigma_q}=0\}$ 
($q=1,2, \ldots ,N$). 
Then the result follows from 
(the proof of) Theorem \ref{THA} 
and \ref{THB}.  
\end{proof}

\end{document}